\documentclass[11pt]{amsart}
\usepackage{graphicx}

\usepackage{amsmath}
\usepackage{amsthm}
\usepackage{amssymb}
\usepackage{enumerate}
\usepackage{cleveref}

\def\P{\mathcal P}

\def\C{\mathcal C}
\def\L{\mathcal L}

\addtolength{\voffset}{-1cm} 
\addtolength{\hoffset}{-1.5cm} 
\setlength{\textheight}{22cm} \setlength{\textwidth}{16cm}

\newtheorem{lem}{Lemma}[section]

\newtheorem{prop}[lem]{Proposition}
\newtheorem{coro}[lem]{Corollary}
\newtheorem{thm}[lem]{Theorem}

\newtheorem{de}[lem]{Definition}

\newtheorem{pozn}[lem]{Remark}

\newtheorem{obs}[lem]{Observation}
\def\pf{\begin{proof}}
\def\pfk{\end{proof}}

\begin{document}
\title{Factor frequencies in languages invariant under more symmetries}
\maketitle
\begin{center}
\author{L{\!'}. Balkov\'a\footnote{e-mail: lubomira.balkova@fjfi.cvut.cz}\\[2mm]
{\normalsize Department of Mathematics FNSPE, Czech Technical
University in Prague}\\
{\normalsize Trojanova 13, 120 00 Praha 2, Czech Republic}}
\end{center}

\begin{abstract}
The number of frequencies of factors of length $n+1$ in a~recurrent aperiodic infinite word does not exceed
$3\Delta \C(n)$, where $\Delta \C (n)$ is the first difference of factor complexity, as shown by Boshernitzan.
Pelantov\'a together with the author derived a~better upper bound for infinite words whose language is closed under reversal.
In this paper, we further diminish the upper bound for uniformly recurrent infinite words whose language
is invariant under all elements of a~finite group of symmetries and we prove the optimality of the obtained upper bound.

\end{abstract}
\section{Introduction}
When studying factor frequencies, Rauzy graph is a~powerful tool.
Using this tool, the following results have been obtained. Dekking in~\cite{De} has described factor frequencies of two famous infinite words -- the Fibonacci word and the Thue-Morse word. Using Rauzy graphs, it is readily seen that frequencies of factors of a given length of any Arnoux-Rauzy word over an $m$-letter alphabet attain at most $m+1$ distinct values.
Explicit values of factor frequencies have been derived by Berth\'e in~\cite{Be} for Sturmian words
and by Wozny and Zamboni in~\cite{WoZa} for Arnoux-Rauzy words in general.

Queff\'elec in~\cite{Qu} has explored factor frequencies of fixed points of morphisms from another point of view -- as a~shift invariant probability measure. She has provided a~rather complicated algorithm for the computation of values of such a~measure. For some special classes of fixed points of morphisms (circular marked uniform morphisms), Frid~\cite{Fr} has described completely their factor frequencies.

A~simple idea concerning Rauzy graphs lead Boshernitzan~\cite{Bo} to an upper bound on the number of different factor frequencies in
an arbitrary recurrent aperiodic infinite word. He has shown that the
number of frequencies of factors of length $n+1$ does not exceed
$3\Delta \C(n)$, where $\Delta \C (n)$ is the first difference of factor complexity.
In~\cite{Ca0}, it has been shown that $\Delta \C(n)$ is bounded for infinite words with sublinear complexity (for instance, fixed points of primitive substitutions is a~subclass of infinite words with sublinear complexity), therefore the number of different
frequencies of factors of the same length is bounded.

In our previous paper~\cite{BaPe}, making use of reflection symmetry of Rauzy graphs, we have diminished Boshernitzan's upper bound for infinite words whose language is closed under reversal.

This time, we generalize our result to infinite words whose language is invariant under all elements of a~group of symmetries and whose Rauzy graphs are therefore invariant under all elements of a~group of automorphisms.
In Section 2, we introduce basic notions, describe the main tool of our proofs -- reduced Rauzy graphs -- and summarize in detail the known upper bounds on the number of factor frequencies.
Section 3 explains what is to be understood under a~symmetry.
In Section 4, we prove Theorem~\ref{UpperBoundMoreSymmetries}, which provides an optimal upper bound on the number of factor frequencies of infinite words whose language
is invariant under all elements a~finite group of symmetries.
Section 5 is devoted to the demonstration that the upper bound from the main theorem is indeed optimal.

Finally, let us mention that the idea to exploit symmetries of the
Rauzy graph was already used in~\cite{BalMaPe} in order to estimate
the number of palindromes of a~given length, and, recently, it has been used profoundly in~\cite{PeSt1, PeSt, St}
for the generalization of the so-called rich and almost rich words (see~\cite{GlJuWiZa}) for languages invariant under more symmetries
than just reversal.


\section{Preliminaries}
An {\em alphabet} $\mathcal A$ is a~finite set of symbols, called
{\em letters}. A~concatenation of letters is a~{\em word}. The {\em
length} of a~word $w$ is the number of letters in $w$ and
is denoted $|w|$. The set $\mathcal A^{*}$ of all finite words
(including the empty word $\varepsilon$) provided with the operation
of concatenation is a~free monoid. The set of all finite words but the empty word $\varepsilon$ is denoted $\mathcal A^{+}$.  We will also deal with
right-sided infinite words ${\mathbf u}=u_0u_1u_2...$, where $u_i \in {\mathcal A}$. A~finite word $w$ is
called a~{\em factor} of the word $u$ (finite or infinite) if there
exist a~finite word $p$ and a~word $s$ (finite or
infinite) such that $u=pws$. The factor $p$ is
a~{\em prefix} of $u$ and $s$ is a~{\em suffix} of $u$. An infinite
word $\mathbf u$ is said to be {\em recurrent} if each of its factors occurs
infinitely many times in $\mathbf u$. An {\em occurrence} of a~finite word $w$ in a~finite word
$v=v_1v_2\ldots v_m$ (in an infinite word $\mathbf u$) is an index $i$ such that $w$ is
a~prefix of the word $v_iv_{i+1} \ldots v_m$ (of the word $u_iu_{i+1}\dots$).
An infinite word $\mathbf u$ is called {\em uniformly recurrent} if for any factor $w$ the set
$\{j-i\ \mid  i \ \text{and} \ j \ \text{are successive occurrences of $w$ in $\mathbf u$} \}$ is bounded.
\subsection{Complexity and special factors}
The {\em language} $\L(\mathbf u)$ of an infinite word $\mathbf u$ is the set of
all factors of $\mathbf u$. We denote by $\L_n(\mathbf u)$ the set of
factors of length $n$ of $\mathbf u$. We define the
{\em factor complexity} (or {\em complexity}) of $\mathbf u$ as the mapping $\C: \mathbb N
\rightarrow \mathbb N$ which associates to every $n$ the number of
different factors of length $n$ of $\mathbf u$, i.e.,
$\C(n)=\# \L_n(\mathbf u).$

An important role for the computation of factor complexity is played by
special factors. We say that a~letter $a$ is a~{\em right extension}
of a~factor $w \in \L(\mathbf u)$ if $wa$ is also a~factor of $\mathbf u$. We
denote by $\rm{Rext}(w)$ the set of all right extensions of $w$ in $\mathbf u$,
i.e., $\rm{Rext}(w)=\{a \in {\mathcal A}\mid  wa \in \L(\mathbf u)\}$.
If $\#\rm{Rext}(w)\geq 2$, then the factor $w$ is called {\em right
special} (RS for short). Analogously, we define {\em left
extensions, $\rm{Lext}(w)$, left special factors} (LS for short).
Moreover, we say that a~factor $w$ is {\em bispecial} (BS for short)
if $w$ is LS and RS.

With these notions in hand, we may introduce a~formula
for the {\em first difference of complexity} $\Delta
\C(n)=\C(n+1)-\C(n)$ (taken from~\cite{Ca1}).
\begin{equation} \label{complexity}
\Delta \C(n) \ =\ \sum_{w \in  \L_n(\mathbf u)}\bigl (\# \rm{Rext}(w)-1 \bigr
) \ =\ \sum_{w \in  \L_n(\mathbf u)}\bigl (\# \rm{Lext}(w)-1 \bigr), \quad n
\in \mathbb N.
\end{equation}
\subsection{Morphisms and antimorphisms}
A~mapping $\varphi$ on ${\mathcal A}^*$ is called
\begin{itemize}
\item a~{\em morphism} if $\varphi(vw)=\varphi(v)\varphi(w)$ for any $v,w \in {\mathcal A}^*$,
\item an {\em antimorphism} if $\varphi(vw)=\varphi(w)\varphi(v)$ for any $v,w \in {\mathcal A}^*$.
\end{itemize}
We denote the set of all morphisms and antimorphisms on ${\mathcal A}^*$ by $AM(\mathcal A^*)$. Together with
composition, it forms a~monoid (the unit element is the identity mapping $\rm{Id}$).
The {\em mirror} (also called {\em reversal}) {\em mapping} $R$ defined by
$R(w_1w_2\dots w_{m-1}w_m)=w_mw_{m-1}\dots w_2w_1$
is an involutive antimorphism, i.e., $R^2=\rm{Id}$.
It is obvious that any antimorphism is a~composition of $R$ and a~morphism.

A~language $\L(\mathbf u)$ is {\em closed (invariant) under reversal} if for every
factor $w\in \L(\mathbf u)$, also its {\em mirror image}
$R(w)$ belongs to $\L(\mathbf u)$.
A factor $w$ which coincides with its mirror image $R(w)$ is called a~{\em
palindrome}.
More generally,
a~language $\L(\mathbf u)$ is {\em closed (invariant) under an antimorphism or morphism $\Psi \in AM(\mathcal A^*)$} if for every
factor $w\in \L(\mathbf u)$, also $\Psi(w)$ belongs to $\L(\mathbf u)$.
If $\theta$ is an antimorphism on ${\mathcal A}^*$, then $w=\theta(w)$ is called
a~\mbox{{\em $\theta$-palindrome}}.
It is not difficult to see that an infinite word whose language is closed under an antimorphism of finite order is recurrent.

We define the $\theta$-{\em palindromic complexity} of the infinite word $\mathbf u$ as the mapping $\P_{\theta}: \mathbb N
\rightarrow \mathbb N$ satisfying
$\P_{\theta}(n)=\# \{w \in {\mathcal L}_n(\mathbf u)\mid  w=\theta(w)\}$.
If $\theta=R$, we write $\P(n)$ instead of $\P_{R}(n)$.
Clearly, $\P(n)\leq \C(n)$ for all $n \in \mathbb N$.
A non-trivial inequality between $\P(n)$ and $\C(n)$ can be
found in~\cite{AlBaCaDa}. Here, we use a~result from
\cite{BalMaPe}.
\begin{thm}\label{odhadReversal}
If the language of an infinite word is
closed under reversal, then for all $n \in \mathbb N$, we have
 \begin{equation}\label{odhadPal1}
\P(n)+\P(n+1)  \ \leq \ \Delta \C(n) +2.
 \end{equation}
 \end{thm}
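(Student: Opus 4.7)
The plan is to analyze the Rauzy graph $G_n$ of $\mathbf u$ together with the involution induced by $R$. Recall that $G_n$ has vertex set $\L_n(\mathbf u)$ and directed edge set $\L_{n+1}(\mathbf u)$, an edge $v$ going from its length-$n$ prefix to its length-$n$ suffix, so that $\Delta\C(n)=|E(G_n)|-|V(G_n)|$. Closure of $\L(\mathbf u)$ under $R$ produces an involution on both vertices and edges of $G_n$ whose fixed points are precisely the $\P(n)$ palindromic vertices and the $\P(n+1)$ palindromic edges. I would then pass to the quotient graph $\bar G_n$ obtained by identifying each factor with its mirror image: since non-palindromic factors come in pairs $\{w,R(w)\}$, one has
\begin{equation*}
|V(\bar G_n)| \ = \ \tfrac12\bigl(\C(n)+\P(n)\bigr),\qquad |E(\bar G_n)| \ = \ \tfrac12\bigl(\C(n+1)+\P(n+1)\bigr).
\end{equation*}

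Next I would show that the loops of $\bar G_n$ correspond bijectively to palindromes of length $n+1$. If $v$ is a palindrome of length $n+1$ with prefix $p$, then its suffix is $R(p)$, so the orbit $\{v\}$ becomes a loop at the vertex $\{p,R(p)\}$. Conversely, for a non-palindromic $v$ of length $n+1$ with prefix $p$ and suffix $s$, the orbit-edge $\{v,R(v)\}$ is a loop only if $p\in\{s,R(s)\}$; the case $p=s$ forces all letters of $v$ to coincide and the case $p=R(s)$ forces $v=R(v)$, both contradicting non-palindromicity. Hence $\bar G_n$ has exactly $\P(n+1)$ loops.

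The final ingredient is connectivity: because closure under the finite-order antimorphism $R$ implies recurrence of $\mathbf u$, the graph $G_n$ is strongly connected, so $\bar G_n$ is connected. For a connected graph the cycle rank equals $|E|-|V|+1$, and distinct loops are independent cycles, so $|E(\bar G_n)|-|V(\bar G_n)|+1\geq \P(n+1)$. Substituting the expressions for $|V(\bar G_n)|$ and $|E(\bar G_n)|$ from the first paragraph and multiplying by $2$ yields $\Delta\C(n)\geq \P(n)+\P(n+1)-2$, which is \eqref{odhadPal1}. I expect the main technical point to be the loop-counting step, where one must rule out that a non-palindromic orbit-edge ever collapses to a loop in $\bar G_n$; the remaining argument is a short Euler-characteristic computation.
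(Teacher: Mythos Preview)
The paper does not include its own proof of Theorem~\ref{odhadReversal}; it is quoted from~\cite{BalMaPe} as a known result.  So there is nothing in this paper to compare against directly.  That said, your argument is correct and self-contained.

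Your route via the quotient graph $\bar G_n$ and the cycle rank is clean.  The essential checks all go through: the orbit counts give $|V(\bar G_n)|=\tfrac12(\C(n)+\P(n))$ and $|E(\bar G_n)|=\tfrac12(\C(n+1)+\P(n+1))$; the loop-classification is right because $p=R(s)$ forces $v_i=v_{n-i}$ for all $i$, hence $v=R(v)$, while $p=s$ forces $v$ to be a constant word, again a palindrome; recurrence (noted in the paper to follow from closure under a finite-order antimorphism) gives strong connectivity of $G_n$ and hence connectivity of $\bar G_n$; and finally, distinct loops in a connected multigraph are linearly independent in the cycle space, so their number is bounded by the cycle rank $|E|-|V|+1$.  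Substituting and clearing the factor $2$ gives exactly~\eqref{odhadPal1}.

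The original argument in~\cite{BalMaPe} also works through the Rauzy graph and its reversal symmetry, but proceeds by a more hands-on count of special factors and simple paths rather than by forming the quotient and invoking the first Betti number.  Your formulation packages the same combinatorics more conceptually: the single inequality ``$\#\text{loops}\leq\text{cycle rank}$'' replaces an explicit case analysis.  The trade-off is that your presentation leans on the reader being comfortable with multigraph homology, whereas the original stays entirely at the level of word combinatorics.
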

This result has been recently generalized in~\cite{PeSt}.
\begin{thm}\label{odhadPal}
Let $G \subset AM({\mathcal A}^*)$ be a~finite group containing an antimorphism and let $\mathbf u$ be an infinite word whose language is invariant under
all elements of $G$. If there exists an integer $N \in \mathbb N$ such that any factor of $\mathbf u$ of length $N$ contains all letters of $\mathcal A$, then
$$\sum_{\theta \in G^{(2)}} \bigl(\P_{\theta}(n)+\P_{\theta}(n+1)\bigr)\ \leq \ \Delta\C(n)+\#G \quad \text{for all $n \geq N,$}$$
where $G^{(2)}$ is the set of involutive antimorphism in $G$.
\end{thm}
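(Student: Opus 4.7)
My plan is to mimic the proof of Theorem~\ref{odhadReversal}, treating one involutive antimorphism $\theta \in G^{(2)}$ at a time and then summing over $G^{(2)}$. The foundational observation, which I would verify first, is the extension symmetry for a $\theta$-palindrome $w \in \L_n(\mathbf u)$: the map $a \mapsto \theta(a)$ is a bijection $\mathrm{Lext}(w) \to \mathrm{Rext}(w)$, and bilaterally $awb \in \L(\mathbf u)$ iff $\theta(b)\,w\,\theta(a) \in \L(\mathbf u)$. A direct consequence is that every $\theta$-palindrome of length $n+1$ has the unique form $a\bar w\theta(a)$ with $\bar w$ a $\theta$-palindrome of length $n-1$, so $\P_\theta(n+1)$ is counted by admissible pairs $(\bar w, a)$.

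The core strategy is to attribute each object counted by $\P_\theta(n) + \P_\theta(n+1)$, summed over $\theta \in G^{(2)}$, to a right-extension slot $(w, a)$ with $w \in \L_n(\mathbf u)$ and $a \in \mathrm{Rext}(w)$, and to show that this attribution is injective up to a defect of at most $\#G$. The hypothesis that every factor of length at least $N$ contains all letters of $\mathcal A$ rules out pathological letter-isolating configurations and forces $G$-orbits in $\L_n(\mathbf u)$ to be full-sized for $n \geq N$. By formula~(\ref{complexity}), $\sum_{w \in \L_n(\mathbf u)}(\#\mathrm{Rext}(w) - 1) = \Delta\C(n)$, so the total surplus of right-extension slots beyond one per factor is exactly $\Delta\C(n)$; this is what bounds the left-hand side up to the additive $\#G$.

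To handle the bookkeeping, I would classify $G$-orbits on $\L_n(\mathbf u)$ by the order of their stabilizer in $G$. A factor fixed by several involutive antimorphisms contributes multiply to the left-hand side but lies in a correspondingly smaller $G$-orbit, so a Burnside-style averaging redistributes these contributions without double-counting. The additive term $\#G$ arises as a uniform correction for the "central" orbits whose contribution cannot be absorbed by a right-extension surplus; in the case $G = \{\mathrm{Id}, R\}$ with $\#G = 2$, this matches the $+2$ appearing in Theorem~\ref{odhadReversal}.

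The main obstacle I expect is proving this injectivity consistently across all $\theta \in G^{(2)}$ simultaneously. This requires merging the local extension symmetry with the global $G$-action on the Rauzy graph in a way that respects stabilizers, and the letter-completeness hypothesis is essential here: without it, short periodic configurations (e.g.\ powers of a single letter fixed by every $\theta$) would violate the intended injectivity and make the additive $\#G$ insufficient.
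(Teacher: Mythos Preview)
This theorem is not proved in the present paper; it is quoted as an external result from~\cite{PeSt} (Pelantov\'a--Starosta) and then used as a black box in the proof of Theorem~\ref{UpperBoundMoreSymmetries}. There is therefore no proof here against which to compare your proposal.

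Assessing your proposal on its own merits: the ingredients you name are correct and relevant---the extension symmetry $a \mapsto \theta(a)$ on $\mathrm{Lext}(w)\to\mathrm{Rext}(w)$ for a $\theta$-palindrome $w$, the identity $\Delta\C(n)=\sum_{w}(\#\mathrm{Rext}(w)-1)$, and the fact that for $n\geq N$ the $G$-orbits in $\L_n(\mathbf u)$ are controlled by stabilizers. But the central step, the ``attribution map'' sending each $\theta$-palindrome of length $n$ or $n+1$ to a surplus right-extension slot and claimed to be injective up to a defect of exactly $\#G$, is never defined. Invoking ``Burnside-style averaging'' does not substitute for specifying what is being averaged and why the correction is $\#G$ rather than, say, $\#G^{(2)}$ or $2\#G^{(2)}$. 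As written, this is a plausible wish-list rather than a proof sketch with a verifiable core.

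For orientation, the argument in~\cite{PeSt} (and its prototype in~\cite{BalMaPe} for Theorem~\ref{odhadReversal}) is graph-theoretic rather than slot-counting: one lets $G$ act on the Rauzy graph $\Gamma_n$, considers the quotient, and compares an Euler-type count of edges minus vertices on the quotient with $\Delta\C(n)$ through the orbit-stabilizer relation. The $\theta$-palindromic factors of lengths $n$ and $n+1$ are exactly the vertices and edges fixed by the involutive antimorphisms, and the additive $\#G$ emerges as a connectivity correction on the quotient graph. Your combinatorial picture may well be translatable into this framework, but the graph formulation is what makes the injectivity claim and the precise defect term transparent.
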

\begin{pozn}\label{weaker_assumption} Using Remark 23 from~\cite{PeSt}, the assumption on $N$ in Theorem~\ref{odhadPal} can be replaced
with the following weaker assumption:
there exists an integer $N$ such that
\begin{enumerate}
\item for any two antimorphisms $\theta_1, \theta_2 \in G$,
it holds
$$\theta_1 \not =\theta_2 \Rightarrow \theta_1(v)\not =\theta_2(v)\quad \text{for any $v$ with $|v|\geq N$},$$

\item and for any two morphisms $\varphi_1, \varphi_2 \in G$,
it holds
$$\varphi_1 \not =\varphi_2 \Rightarrow \varphi_1(v)\not =\varphi_2(v)\quad \text{for any $v$ with $|v|\geq N$}.$$
\end{enumerate}
If $\mathbf u$ is an infinite word whose language is closed under reversal, i.e., invariant under a~morphism and an antimorphism of $G=\{\rm{Id}, R\}$,
then the above weaker assumption is satisfied already for $N=0$. Therefore, Theorem~\ref{odhadReversal} is indeed a~particular case of Theorem~\ref{odhadPal}.
\end{pozn}
\subsection{Factor frequency}
If $w$ is a~factor of an infinite word $\mathbf u$ and if the following limit exists
$$\lim_{|v| \to \infty, v \in \L(\mathbf u)}
\frac {\# \{ \mbox{occurrences of $w$ in $v$} \} }{|v|}\,,$$
then it is denoted by $\rho(w)$ and called the {\em frequency} of $w$.

Let us recall a~result of Frid~\cite{Fr}, which is useful for the calculation of factor frequencies in
fixed points of primitive morphisms. In order to introduce the result, we need some
further notions. Let $\varphi$ be a~morphism on ${\mathcal A}^*=\{a_1, a_2, \dots, a_m\}^*$. We associate with $\varphi$ the {\em incidence matrix} $M_\varphi$ given by $[M_\varphi]_{ij}=|\varphi(a_j)|_{a_i}$, where $|\varphi(a_j)|_{a_i}$ denotes the number of occurrences of $a_i$ in $\varphi(a_j)$.
The morphism $\varphi$ is called {\em primitive} if there exists $k \in \mathbb N$ satisfying that the power $M_{\varphi}^k$ has all entries strictly positive. As shown in~\cite{Qu}, for fixed points of primitive morphisms,
\begin{itemize}
 \item factor frequencies exist,
 \item it follows from the Perron-Frobenius theorem that the incidence matrix has one dominant eigenvalue $\lambda$, which is larger than the modulus of any other eigenvalue,
     \item the components of the unique eigenvector $(x_1, x_2,\dots, x_m)^T$ corresponding to $\lambda$ normalized so that $\sum_{i=1}^m x_i=1$ coincide with the letter frequencies, i.e., $x_i=\rho(a_i)$ for all $i \in \{1,2,\dots,m\}$.
\end{itemize}
Let $\varphi$ be a~morphism on ${\mathcal A}^*$. We denote $\psi_{ij}:{\mathcal A}^+ \to {\mathcal A}^+$, where $i,j \in \mathbb N$, the mapping that associates with $v \in {\mathcal A}^+$ the word $\psi_{ij}(v)$ obtained from
$\varphi(v)$ by erasing $i$ letters from the left and $j$ letters from the right, where $i+j < |\varphi(v)|$.
We say that a~word $v \in {\mathcal A}^+$ admits an {\em interpretation} $s=(b_0b_1\dots b_m, i,j)$ if $v=\psi_{ij}(b_0b_1\dots b_m)$, where $b_i \in {\mathcal A}$ and $i <|\varphi(b_0)|$ and $j<|\varphi(b_m)|$. The word $a(s)=b_0b_1\dots b_m$ is an {\em ancestor} of $s$. The set of all interpretations of $v$ is denoted $I(v)$. Now we can recall the promised result of Frid~\cite{Fr}.

\begin{prop}\label{Frid}
Let $\varphi$ be a~primitive morphism having a~fixed point $\mathbf u$ and let $\lambda$ be the dominant eigenvalue of the incidence matrix $M_\varphi$. Then for any factor $v \in {\mathcal L}(\mathbf u)$, it holds
$$\rho(v)=\frac{1}{\lambda}\sum_{s \in I(v)}\rho(a(s)).$$
\end{prop}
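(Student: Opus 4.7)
The plan is to exploit the self-similarity coming from $\mathbf u=\varphi(\mathbf u)$ by counting occurrences of $v$ in long prefixes of $\mathbf u$ that are themselves images of shorter prefixes. Write $p_N$ for the prefix of $\mathbf u$ of length $N$; then $\varphi(p_N)$ is a prefix of $\mathbf u$ whose length grows to infinity with $N$. Since frequencies exist in fixed points of primitive morphisms (as recalled just before the proposition),
\begin{equation*}
\rho(v)\ =\ \lim_{N\to\infty}\frac{\#\{\text{occurrences of }v\text{ in }\varphi(p_N)\}}{|\varphi(p_N)|}.
\end{equation*}

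Next I would classify each occurrence of $v$ in $\varphi(p_N)$ by its interpretation. Any starting position of $v$ in $\varphi(p_N)$ lies in a unique image block $\varphi(b_k)$ of some letter $b_k$ of $p_N$ at a uniquely determined offset $i$ with $0\le i<|\varphi(b_k)|$; extending to the right until $v$ terminates at offset $j<|\varphi(b_{k+m})|$ inside a further block $\varphi(b_{k+m})$ picks out a unique interpretation $s=(b_kb_{k+1}\cdots b_{k+m},i,j)\in I(v)$ with ancestor $a(s)=b_k\cdots b_{k+m}$. Conversely, every occurrence of $a(s)$ in $p_N$ produces through $\varphi$ exactly one occurrence of $v$ in $\varphi(p_N)$ with that interpretation. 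Since $|\varphi(b)|\ge 1$ for every letter and the two endpoints of $v$ must lie inside $\varphi(b_0)$ and $\varphi(b_m)$, one has $|a(s)|\le|v|+2$, so $I(v)$ is finite, and only boundedly many interpretations can be distorted by the two ends of $p_N$. Hence
\begin{equation*}
\#\{\text{occ. of }v\text{ in }\varphi(p_N)\}\ =\ \sum_{s\in I(v)}\#\{\text{occ. of }a(s)\text{ in }p_N\}\ +\ O(1).
\end{equation*}

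Finally I would divide by $|\varphi(p_N)|$ and pass to the limit. A short Perron--Frobenius computation using $|\varphi(p_N)|=\sum_{a\in\mathcal A}|p_N|_a|\varphi(a)|$, $|p_N|_a/|p_N|\to x_a=\rho(a)$, and the row-sum of $M_\varphi\,x=\lambda x$ gives $|\varphi(p_N)|/|p_N|\to\lambda$, while $\#\{\text{occ. of }a(s)\text{ in }p_N\}/|p_N|\to\rho(a(s))$ by definition; the $O(1)$ term drops out after normalization and summing over the finite set $I(v)$ yields the claimed identity. The delicate step is the bijective bookkeeping in the middle paragraph: one has to verify that the map from starting positions of $v$ in $\varphi(p_N)$ to pairs (interpretation, occurrence of ancestor in $p_N$) is genuinely one-to-one, that each ancestor that arises is a legitimate factor of $\mathbf u$, and that the boundary-affected contributions at the two ends of $p_N$ are truly $O(1)$ rather than growing with $|v|$ or with $|I(v)|$.
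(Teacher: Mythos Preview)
The paper does not prove this proposition: it is quoted verbatim from Frid's paper as a background result, with no argument supplied. So there is nothing to compare against; your sketch stands on its own, and it is the standard desubstitution proof that underlies Frid's formula.

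Your reasoning is correct. One sharpening: once you fix the block decomposition $\varphi(p_N)=\varphi(b_0)\cdots\varphi(b_{N-1})$ coming from $p_N=b_0\cdots b_{N-1}$, the correspondence between occurrences of $v$ in $\varphi(p_N)$ and pairs $(s,\text{occurrence of }a(s)\text{ in }p_N)$ with $s\in I(v)$ is an \emph{exact} bijection, not merely up to $O(1)$. Every occurrence of $v$ lies entirely inside $\varphi(p_N)$, so its induced ancestor lies entirely inside $p_N$; conversely every occurrence of an ancestor in $p_N$ maps under $\varphi$ to a copy of $v$ inside $\varphi(p_N)$. There is no boundary defect, and the hedging in your final paragraph can be dropped. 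Interpretations whose ancestor is not a factor of $\mathbf u$ simply contribute zero on both sides (reading $\rho=0$ off $\mathcal L(\mathbf u)$), so they are harmless. Your finiteness bound $|a(s)|\le|v|+2$ is justified by $|v|=|\varphi(a(s))|-i-j>|\varphi(a(s))|-|\varphi(b_0)|-|\varphi(b_m)|\ge|a(s)|-2$, where the last inequality uses that primitivity forces $|\varphi(b)|\ge1$ for every letter $b$. The Perron--Frobenius step giving $|\varphi(p_N)|/|p_N|\to\lambda$ is exactly as you describe.
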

\subsection{Reduced Rauzy graphs}
Assume throughout this section that factor frequencies of infinite words in question exist.
The {\em Rauzy graph} of order $n$ of an infinite word $\mathbf u$ is
a~directed graph  $\Gamma_n$ whose set of vertices is $\L_n(\mathbf u)$ and  set of edges is $\L_{n+1}(\mathbf u)$. An edge $e = w_0
w_1 \dots w_n$ starts in the vertex $w=w_0w_1\dots w_{n-1}$, ends in
the vertex  $v=w_1\dots w_{n-1}w_n$, and is labeled by its factor
frequency $\rho(e)$.

It is easy to see that edge frequencies in a~Rauzy graph $\Gamma_n$ behave similarly as
the current in a~circuit. We may formulate an analogy of Kirchhoff's current law:
the sum of frequencies of edges ending in a~vertex equals the sum of
frequencies of edges starting in this vertex.
\begin{obs}[Kirchhoff's law for frequencies]\label{KLaw}
Let $w$ be a~factor of an infinite word $\mathbf u$ whose factor frequencies exist. Then
$$\rho(w)=\sum_{a \in \rm{Lext}(w)}\rho(aw)=\sum_{a \in \rm{Rext}(w)}\rho(wa).$$
\end{obs}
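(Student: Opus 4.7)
The plan is to treat Kirchhoff's law as a bookkeeping identity: I will count occurrences of $w$ inside a growing factor $v \in \L(\mathbf u)$ and split them according to the letter that follows (or precedes) each occurrence. The key observation is that each occurrence of $w$ in $v$, except possibly one occurrence at the extreme right end of $v$, is followed by a unique letter lying in $\rm{Rext}(w)$, so it corresponds bijectively to an occurrence of a unique word $wa$ in $v$.

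Concretely, write $v = v_1 v_2 \dots v_m$. If $w$ occurs in $v$ at some position $i \leq m - |w|$, then the letter $a := v_{i+|w|}$ lies in $\rm{Rext}(w)$ (because the factor $wa$ then appears in $\mathbf u$), and $wa$ occurs in $v$ at the same position $i$. Conversely, any occurrence of $wa$ with $a \in \rm{Rext}(w)$ in $v$ yields an occurrence of $w$ at the same position. The only occurrence of $w$ in $v$ that this argument fails to capture is the possible one starting at position $m - |w| + 1$. Hence
$$\Bigl| \#\{\text{occurrences of } w \text{ in } v\} - \sum_{a \in \rm{Rext}(w)} \#\{\text{occurrences of } wa \text{ in } v\} \Bigr| \leq 1.$$

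Dividing by $|v|$ and passing to the limit $|v| \to \infty$ along factors $v \in \L(\mathbf u)$, the bounded error contributes $O(1/|v|) \to 0$. Since by hypothesis the frequencies $\rho(w)$ and $\rho(wa)$ all exist and $\rm{Rext}(w) \subseteq \mathcal A$ is finite, the first equality follows. The second equality is proved symmetrically, matching each occurrence of $w$ at a position $i \geq 2$ with the unique occurrence of $aw$ where $a = v_{i-1} \in \rm{Lext}(w)$. No substantive obstacle is expected; the only point deserving care is verifying that the discrepancy from the boundary is bounded uniformly in $v$, which is immediate because at most one occurrence of $w$ can sit in the omitted boundary position.
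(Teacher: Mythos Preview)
Your argument is correct: the occurrence-counting identity with an $O(1)$ boundary defect, divided by $|v|$ and taken to the limit, yields the claimed equalities. The paper itself does not supply a proof for this statement---it is labelled an Observation and introduced only with the phrase ``It is easy to see that edge frequencies in a Rauzy graph $\Gamma_n$ behave similarly as the current in a circuit''---so your write-up is simply the natural unpacking of what the paper leaves implicit, and there is nothing further to compare.
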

Kirchhoff's law for frequencies has some useful consequences.
\begin{coro}\label{KLaw1}
Let $w$ be a~factor of an infinite word $\mathbf u$ whose frequency exists.
\begin{itemize}
\item
If $w$ has a~unique right extension $a$, then $\rho(w)=\rho(wa)$.
\item
If $w$ has a~unique left extension $a$, then $\rho(w)=\rho(aw)$.
\end{itemize}
\end{coro}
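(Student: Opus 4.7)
The plan is to invoke Observation~\ref{KLaw} directly, without any additional argument. By hypothesis, the frequency $\rho(w)$ exists, so Kirchhoff's law for frequencies applies to $w$. Suppose first that $w$ has a unique right extension $a$, i.e.\ $\rm{Rext}(w)=\{a\}$. Then the sum $\sum_{b \in \rm{Rext}(w)} \rho(wb)$ degenerates to its single term $\rho(wa)$, and the right-hand equality $\rho(w)=\sum_{b \in \rm{Rext}(w)} \rho(wb)$ of Observation~\ref{KLaw} immediately gives $\rho(w)=\rho(wa)$.

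The statement for a unique left extension is handled symmetrically: if $\rm{Lext}(w)=\{a\}$, the left-hand equality $\rho(w)=\sum_{b \in \rm{Lext}(w)} \rho(bw)$ in Observation~\ref{KLaw} collapses to $\rho(w)=\rho(aw)$. There is no genuine obstacle here; the corollary is just the specialization of Kirchhoff's law to the case where one of the two extension sets is a singleton, and the only thing worth pointing out is that the existence of $\rho(w)$ (assumed throughout Section 2.4) is what legitimizes the application of the observation.
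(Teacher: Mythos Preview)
Your proposal is correct and matches the paper's approach exactly: the paper gives no explicit proof of this corollary, presenting it as an immediate consequence of Observation~\ref{KLaw}, which is precisely what you do.
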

\begin{coro}\label{KLaw2}
Let $w$ be a~factor of an aperiodic recurrent infinite word $\mathbf u$ whose frequency exists.
Let $v$ be the shortest BS factor containing $w$, then $\rho(w)=\rho(v)$.
\end{coro}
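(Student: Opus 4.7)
The plan is to argue by induction on the quantity $|v|-|w|$, where $v$ denotes the shortest BS factor of $\mathbf{u}$ containing $w$. Because $\mathbf{u}$ is recurrent and aperiodic, every factor admits at least one right and one left extension, so failing to be BS is equivalent to having a unique extension on at least one side. The base case $|v|-|w|=0$ forces $w=v$, and the conclusion is immediate.

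For the inductive step, assume $w$ is not BS; without loss of generality $w$ has a unique right extension $a$ (the left-unique case is symmetric). Corollary~\ref{KLaw1} gives $\rho(w)=\rho(wa)$, so it suffices to show that $v$ is still the shortest BS factor containing $wa$: then the inductive hypothesis, applied with the strictly smaller value $|v|-|wa|=|v|-|w|-1$, yields $\rho(wa)=\rho(v)$ and completes the argument.

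To justify that $v$ is the shortest BS factor containing $wa$, I would verify two claims. First, $wa$ is a factor of $v$: at any occurrence of $w$ inside $v$ that is not at the suffix position, the next letter is a right extension of $w$ in $\mathbf{u}$ and is therefore forced to equal $a$; the remaining possibility, that every occurrence of $w$ in $v$ sits at the suffix position, is ruled out because then the right extensions of $v$ in $\mathbf{u}$ would be contained in $\{a\}$, contradicting that $v$ is RS. Second, any BS factor strictly shorter than $v$ and containing $wa$ would also contain $w$, violating the minimality of $v$.

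The main subtle point is precisely the suffix-exclusion step above (together with its mirror, when $w$ has a unique left extension): it is where the bispeciality of $v$ must be combined with the unique-extension property of $w$. Once this is in hand, the induction unwinds as a chain of forced one-letter extensions from $w$ to $v$, each preserving the frequency by Corollary~\ref{KLaw1}, and the equalities concatenate to give $\rho(w)=\rho(v)$.
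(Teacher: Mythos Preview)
Your argument is correct and is precisely the elaboration the paper has in mind: the paper states Corollary~\ref{KLaw2} without proof, merely noting that recurrence and aperiodicity guarantee every factor can be extended to a BS factor, and leaves the iteration of Corollary~\ref{KLaw1} implicit. Your induction on $|v|-|w|$, together with the suffix-exclusion step showing that any BS extension of $w$ must already contain the forced one-letter extension $wa$, makes that implicit chain of equalities rigorous and matches the intended approach.
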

The assumption of recurrence and aperiodicity in Corollary~\ref{KLaw2} is needed in order
to ensure that every factor can be extended to a~BS factor.

Corollary~\ref{KLaw1} implies that if a~Rauzy graph contains a~vertex $w$ with only one
incoming edge $aw$ and one outgoing edge $wb$, then $\rho:=\rho(aw)=\rho(w)=\rho(wb)=\rho(awb)$.
Therefore, we can replace this triplet (edge-vertex-edge) with only one edge $awb$ keeping the frequency $\rho$.
If we reduce the Rauzy graph step by step applying the above
described procedure, we obtain the so-called {\em reduced Rauzy
graph} $\tilde{\Gamma}_n$, which simplifies the investigation of
edge frequencies. In order to precise this construction, we
introduce the notion of a~simple path.
\begin{de}\label{simple_path}
Let $\Gamma_n$ be the Rauzy graph of order $n$
of an infinite word $\mathbf u$. A~factor $e$ of length larger than $n$ such that its prefix and its suffix of length $n$ are special factors and $e$ does not contain any other special factors is called a~simple path. We define the label of a~simple path $e$ as $\rho(e)$.
\end{de}
\begin{de}\label{reduced_Rauzy_graph}
The reduced Rauzy graph
$\tilde{\Gamma}_n$ of $\mathbf u$ of order $n$ is a~directed graph whose
set of vertices is formed by LS and RS factors of $\L_n(\mathbf u)$
and whose set of edges is given in the following way. Vertices $w$
and $v$ are connected with an edge $e$ if there exists in $\Gamma_n$
a~simple path starting in $w$ and ending in $v$. We assign to such
an edge $e$ the label of the corresponding simple path.
  \end{de}
For a~recurrent word $\mathbf u$, at least one edge starts and at least one
edge ends in every vertex of $\Gamma_n$. If $\mathbf u$ is moreover aperiodic, then all its Rauzy graphs
 contain at least one LS and one RS factor. It is thus not difficult to see that for recurrent aperiodic words,
the set of edge labels in $\Gamma_n$ is equal to the set of edge labels in the reduced Rauzy graph $\tilde{\Gamma}_n$.
The number of edge labels in the Rauzy graph $\tilde{\Gamma}_n$ is clearly less
or equal to the number of edges in $\tilde{\Gamma}_n$. Let us
calculate the number of edges in $\tilde{\Gamma}_n$ in order to get
an upper bound on the number of frequencies of factors in $\L_{n+1}(\mathbf u)$.

For every RS factor $w \in \L_n(\mathbf u)$, it holds that
$\#\rm{Rext}(w)$ edges begin in $w$ and for every LS factor $v \in
\L_n(\mathbf u)$ which is not RS, only one edge begins in $v$, thus
we get the following formula
\begin{equation} \label{frequencies_vertices} \# \{ e
 \mid   e \ \mbox{edge in} \ \tilde {\Gamma}_n\}=\sum_{w \ \text{RS in $\L_n(\mathbf u)$}} \#\rm{Rext}(w)+\sum_{v \
\text{LS} \ \text{not RS in $\L_n(\mathbf u)$}}1. \end{equation}
We rewrite the first term using~\eqref{complexity} and the second term using the definition of BS factors in the following way
\begin{equation}\label{edge_vertex} \# \{ e \mid   e \ \mbox{edge in} \ \tilde {\Gamma}_n\}=\Delta C(n)+
\sum_{v \ \text{RS in $\L_n(\mathbf u)$} }1+\sum_{v \ \text{LS in $\L_n(\mathbf u)$}}1 - \sum_{v \ \text{BS in $\L_n(\mathbf u)$} }1.
\end{equation}
 Since $\#\rm{Rext}(w) - 1 \geq 1$
for any RS factor $w$ and, similarly, for LS factors, we have
\begin{equation}\label{odhadRS}
\#\{w\in \L_n(\mathbf u) \mid   w\ RS\} \ \leq  \ \ \Delta \C(n) \quad
{\rm and} \quad \#\{w\in \L_n(\mathbf u) \mid    w\ LS\} \ \leq  \ \
\Delta C(n).
\end{equation}
By combining \eqref{edge_vertex} and \eqref{odhadRS},
we obtain
\begin{equation}\label{AplusB} \# \{ e \mid   e \ \mbox{edge in} \ \tilde {\Gamma}_n\} \ \leq \ 3\Delta C(n)-X,
\end{equation}
where $X$ is the number of BS factors of length $n$.
This provides us with the result initially proved by Boshernitzan in~\cite{Bo}.
\begin{thm}\label{Boshernitzan} Let $\mathbf u$ be an aperiodic recurrent infinite word
such that the frequency
$\rho(w)$ exists for every factor $w \in \L(\mathbf u)$. Then for every $n \in \mathbb N$, it holds
\[\#\{\rho(e) \mid  e \in \L_{n+1}(\mathbf u)\}\quad \leq \quad 3\Delta
C(n).\] \end{thm}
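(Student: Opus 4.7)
The plan is to assemble the pieces already developed in Section~2.4; essentially the argument is laid out there and only needs to be closed. Fix $n \in \mathbb N$ and work with the reduced Rauzy graph $\tilde{\Gamma}_n$. First I would verify that every factor frequency of a word of length $n+1$ appears as an edge label of $\tilde{\Gamma}_n$: since $\mathbf u$ is recurrent and aperiodic, every edge of $\Gamma_n$ lies on a (unique) simple path $\pi$ of $\tilde{\Gamma}_n$, and by iterating Corollary~\ref{KLaw1} along the non-special internal vertices of $\pi$ one gets $\rho(e)=\rho(\pi)$ for every such $e$. Hence
\[\#\{\rho(e)\mid e\in \L_{n+1}(\mathbf u)\} \ \leq \ \#\{e\mid e\text{ edge in }\tilde{\Gamma}_n\}.\]

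Next I would bound the number of edges of $\tilde{\Gamma}_n$ using the identities already derived in the preliminaries. The vertex-by-vertex count \eqref{frequencies_vertices}, combined with \eqref{complexity} (applied to turn $\sum_{w\ \text{RS}}\#\rm{Rext}(w)$ into $\Delta\C(n)+\#\{w\ \text{RS}\}$) and the inclusion--exclusion $\#\{\text{LS not RS}\}=\#\{\text{LS}\}-\#\{\text{BS}\}$, yields exactly \eqref{edge_vertex}:
\[\#\{e\mid e\text{ edge in }\tilde{\Gamma}_n\}\ =\ \Delta\C(n)+\#\{w\in\L_n(\mathbf u): w\ \text{RS}\}+\#\{w\in\L_n(\mathbf u): w\ \text{LS}\}-X,\]
where $X$ is the number of BS factors of length $n$. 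Plugging in the two inequalities of \eqref{odhadRS}, each of which bounds the RS- and LS-counts by $\Delta\C(n)$, gives \eqref{AplusB}, i.e.\ $\#\{\text{edges in }\tilde{\Gamma}_n\}\leq 3\Delta\C(n)-X$.

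Finally, since $X\geq 0$, dropping it yields the claimed bound $3\Delta\C(n)$. I do not expect any genuine obstacle: every ingredient (reduced Rauzy graph, Kirchhoff's law for frequencies, the counting identity \eqref{frequencies_vertices}, and the bounds \eqref{odhadRS}) is already in place. The only step that deserves a second look is the very first one — confirming that the reduction does not lose any factor frequency — but this is precisely why Corollary~\ref{KLaw1} is formulated for vertices with a \emph{unique} one-sided extension, and why aperiodicity together with recurrence is assumed (to ensure every edge of $\Gamma_n$ actually sits on some simple path terminating at special factors on both ends).
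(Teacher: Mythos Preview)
Your proposal is correct and follows exactly the argument the paper lays out in Section~2.4: pass to the reduced Rauzy graph via Corollary~\ref{KLaw1}, count its edges using \eqref{frequencies_vertices}--\eqref{edge_vertex}, apply the bounds \eqref{odhadRS} to obtain \eqref{AplusB}, and drop the nonnegative term $X$. There is nothing to add.
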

In the paper~\cite{BaPe}, we have considered infinite words with language closed under reversal and we have lowered the upper bound from Theorem~\ref{Boshernitzan} for them.
\begin{thm}\label{UpperBoundReversalClosed}
Let $\mathbf u$ be an infinite word whose language ${\mathcal L}(\mathbf u)$ is
closed under reversal and such that the frequency $\rho(w)$ exists for every factor $w \in
{\mathcal L}(\mathbf u)$. Then for every $n
\in \mathbb N$, we have
\begin{equation}\label{BetterUpBound} \# \{\rho(e)\mid  e \in {\mathcal
L}_{n+1}(\mathbf u) \}\quad \leq \quad 2\Delta {\mathcal C}(n)+1-
\tfrac{1}{2}X-\tfrac{1}{2}Y,
\end{equation}
where $X$ is the number of BS factors of length $n$ and $Y$ is the
number of palindromic BS factors of length $n$.
\end{thm}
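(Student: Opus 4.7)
The plan is to exploit the involutive symmetry that the reversal $R$ induces on the reduced Rauzy graph $\tilde\Gamma_n$. Because $\mathcal L(\mathbf u)$ is closed under reversal, applying the limit definition of $\rho$ to $v$ and $R(v)$ simultaneously yields $\rho(w)=\rho(R(w))$ for every factor $w$, so $R$ permutes the edges of $\tilde\Gamma_n$ while preserving their labels. Consequently, the number of distinct edge labels is bounded by the number of $R$-orbits of edges. Writing $A$ for the total number of edges of $\tilde\Gamma_n$ and $B$ for the number of $R$-fixed edges (those simple paths which are themselves palindromes), the orbits have size one or two, so
$$\#\{\rho(e)\mid e\in\mathcal L_{n+1}(\mathbf u)\}\ \leq\ \tfrac{1}{2}(A+B).$$

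The bound on $A$ is immediate from Section~2. Formula~\eqref{edge_vertex} gives $A=\Delta\mathcal C(n)+\#\mathrm{RS}+\#\mathrm{LS}-X$; reversal swaps left and right special factors, so $\#\mathrm{LS}=\#\mathrm{RS}$; and \eqref{odhadRS} bounds each of them by $\Delta\mathcal C(n)$. Hence $A\leq 3\Delta\mathcal C(n)-X$.

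The main work is to prove $B\leq \Delta\mathcal C(n)+2-Y$. I introduce a map $\phi$ from the set of palindromic simple paths into the set of palindromes of length $n$ or $n+1$, sending a palindromic simple path $e$ to its palindromic subword centered at the center of $e$, of length $n$ if $|e|\equiv n\pmod 2$ and of length $n+1$ otherwise. Two properties are claimed. First, the image of $\phi$ avoids every palindromic bispecial factor of length $n$: such a factor would sit at a strictly interior position of a palindromic simple path whose central subword of length $n$ it is, yet interior length-$n$ factors of simple paths are by definition not special (for palindromes, ``special'' is the same as ``bispecial'' thanks to reversal closure). Second, $\phi$ is injective. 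If two palindromic simple paths $e_1,e_2$ shared a central palindrome $c$, the case $|e_1|<|e_2|$ is excluded because the special length-$n$ prefix of the shorter path would appear as an interior length-$n$ factor of the longer one; with $|e_1|=|e_2|$, writing $e_i=u_i\,c\,R(u_i)$ with $|u_i|=k$, one matches the letters of $u_1$ and $u_2$ outward from $c$ by induction on $m=1,2,\dots,k$: at step $m$, the length-$n$ factor at position $k-m+1$ already coincides in $e_1$ and $e_2$ by the inductive hypothesis, and a disagreement $u_1[k-m]\neq u_2[k-m]$ would furnish it with two distinct left extensions, making it left special in spite of being interior to both simple paths. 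Granting these two properties, $B\leq(\P(n)-Y)+\P(n+1)$, and Theorem~\ref{odhadReversal} upgrades this to $B+Y\leq\Delta\mathcal C(n)+2$.

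Combining $A\leq 3\Delta\mathcal C(n)-X$ and $B\leq \Delta\mathcal C(n)+2-Y$ and dividing by two produces $2\Delta\mathcal C(n)+1-\tfrac12X-\tfrac12Y$, as required. The main obstacle is the inductive letter-matching step in the injectivity argument, which requires tracking exactly which length-$n$ factors lie in the interior of a simple path and hence inherit non-specialness; everything else is a routine combination of the identities of Section~2 with Theorem~\ref{odhadReversal}.
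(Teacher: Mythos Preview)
Your proof is correct and follows the same scheme the paper uses (Theorem~\ref{UpperBoundReversalClosed} is quoted from~\cite{BaPe}, but the paper re-derives it as the case $G=\{\mathrm{Id},R\}$ of Theorem~\ref{UpperBoundMoreSymmetries}): orbit-count the edges of $\tilde\Gamma_n$ under $R$, bound the total number of edges by $3\Delta\mathcal C(n)-X$ via~\eqref{AplusB}, and bound the number of $R$-fixed edges by $\P(n)+\P(n+1)-Y\le\Delta\mathcal C(n)+2-Y$ via Theorem~\ref{odhadReversal}.

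One remark on the injectivity of $\phi$: the inductive letter-matching is heavier than needed, and your $|e_1|<|e_2|$ case tacitly assumes $e_1$ sits centrally inside $e_2$ before that has been shown. Both issues disappear once you observe that the central palindrome $c$ is either an edge of $\Gamma_n$ (when $|c|=n+1$) or an \emph{interior} vertex of $\Gamma_n$ (when $|c|=n$, since then $|e|\ge n+2$); in either case the simple path through $c$ is uniquely determined by repeated unique left/right extension, which is exactly the content of Observation~(5) in the paper.
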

\begin{coro}
Let $\mathbf u$ be an infinite word whose language
${\mathcal L}(\mathbf u)$ is closed under reversal and such that the frequency $\rho(w)$ exists for every
factor $w \in {\mathcal L}(\mathbf u)$. Then the
number of distinct factor frequencies obeys for all $n \in \mathbb N$,
\begin{equation}\label{BasicBound}\# \{\rho(e)\mid  e \in {\mathcal L}_{n+1}(\mathbf u) \}\quad \leq \quad
2\Delta {\mathcal C}(n)+1,\end{equation}
where the equality is reached if and only if $\mathbf u$ is purely periodic.
\end{coro}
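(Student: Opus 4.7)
The upper bound \eqref{BasicBound} is immediate from Theorem~\ref{UpperBoundReversalClosed}: since $X$ and $Y$ are nonnegative integer counts, dropping the corrections $-\tfrac{1}{2}X-\tfrac{1}{2}Y$ from \eqref{BetterUpBound} only enlarges the right-hand side. So the whole content of the corollary lies in the equality clause.

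For the ``if'' direction, assume $\mathbf{u}$ is purely periodic with minimal period~$p$ and fix $n$ with $n+1\geq p$. The Rauzy graph $\Gamma_n$ is then a simple cycle of length~$p$ with no left-special, right-special, or bispecial factors, so $\Delta\mathcal{C}(n)=0$ and $X=Y=0$; accordingly the right-hand side of \eqref{BasicBound} equals~$1$. At the same time every factor of length $n+1$ is traversed exactly once per period and hence has frequency $1/p$, so the left-hand side also equals~$1$, and equality is attained.

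For the converse, suppose equality in \eqref{BasicBound} holds at some~$n$. Comparison with the sharper bound of Theorem~\ref{UpperBoundReversalClosed} forces $\tfrac{1}{2}X+\tfrac{1}{2}Y=0$, and since $X,Y\ge 0$ we get $X=Y=0$, i.e.\ no BS factor of length~$n$ exists. I then argue that this is incompatible with aperiodicity. If $\mathbf{u}$ were aperiodic, Morse--Hedlund together with \eqref{complexity} would guarantee at least one RS and (symmetrically) one LS factor of length~$n$; reversal closure would make the mirror map $R$ an involution that sends LS to RS and vice versa, whose fixed points on the disjoint union of LS and RS factors are exactly the palindromic BS factors, counted by~$Y$. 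With $Y=0$ the involution is fixed-point free and pairs LS with RS factors bijectively; coupling this pairing with the palindromic-complexity bound of Theorem~\ref{odhadReversal}, applied at lengths $n-1$, $n$, and $n+1$, should yield a contradiction with $\Delta\mathcal{C}(n)\ge 1$. Consequently $\mathbf{u}$ must be periodic, and since every factor recurs with positive frequency $\mathbf{u}$ is in fact purely periodic.

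The main obstacle is precisely this last extraction of a contradiction. Bare reflection symmetry only tells us that $|\mathrm{LS}|=|\mathrm{RS}|$ at length~$n$, which is not on its own inconsistent with aperiodicity, so the argument must exploit quantitative palindromic information — either via Theorem~\ref{odhadReversal} applied across neighbouring lengths, or via a direct orbit count for the $R$-action on the reduced Rauzy graph $\tilde{\Gamma}_n$, whose edges carry the frequencies the corollary is counting. Once that structural lemma is in place, the rest of the proof is a straightforward assembly of the two inequalities.
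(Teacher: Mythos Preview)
The inequality and the ``if'' direction are fine (with the cosmetic fix $n\ge p$ rather than $n+1\ge p$, so that $\Gamma_n$ really is a $p$-cycle). The gap you flag in the ``only if'' direction, however, is not a missing lemma: the implication you are aiming at is false. Any Sturmian word is aperiodic, has language closed under reversal, and---as the paper notes immediately after this corollary---attains the sharper bound \eqref{BetterUpBound} for every $n$. At the infinitely many lengths $n$ that carry no bispecial factor one has $X=Y=0$, so \eqref{BetterUpBound} coincides with \eqref{BasicBound} and equality in \eqref{BasicBound} holds there. Hence ``$X=Y=0$ at a single $n$'' is perfectly compatible with aperiodicity, and no palindromic bookkeeping across lengths $n-1,n,n+1$ will manufacture the contradiction you are looking for.

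The equality clause must be read as in the analogous corollary of Section~4: equality holds for \emph{all sufficiently large} $n$ if and only if $\mathbf u$ is purely periodic. With that reading the converse is short. A word whose language is closed under reversal is recurrent; if it is also aperiodic, then every factor extends to a bispecial one (this is precisely what underlies Corollary~\ref{KLaw2}), so bispecial factors occur at arbitrarily large lengths. For each such length $n$ one has $X\ge 1$, and Theorem~\ref{UpperBoundReversalClosed} then gives a bound at most $2\Delta\mathcal C(n)+\tfrac12$, hence strictly below $2\Delta\mathcal C(n)+1$. Thus equality in \eqref{BasicBound} fails for infinitely many $n$, which is the required contrapositive. (The present paper does not spell out a proof of this corollary; it is imported from~\cite{BaPe}.)
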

As shown by Ferenczi and Zamboni~\cite{FeZa}, $m$-iet words
attain the upper bound from~\eqref{BetterUpBound} for all $n \in \mathbb N$. Since Sturmian words are
2-iet words, they reach the upper bound in~\eqref{BetterUpBound} for all $n \in \mathbb N$, too.
Consequently, the upper bound from~\eqref{BetterUpBound} is optimal and cannot be improved while preserving the
assumptions. However, as we will show in the sequel, if the language of an infinite word $\mathbf u$ is invariant under more symmetries,
the upper bound from~\eqref{BetterUpBound} may be lowered considerably.
\section{Symmetries preserving factor frequency}
We will be interested in symmetries preserving in a~certain way factor occurrences in $\mathbf u$, and consequently, frequencies of factors of $\mathbf u$.
Let us call a~{\em symmetry} on ${\mathcal A}^*$ any mapping $\Psi$ satisfying the following two properties:
\begin{enumerate}
\item $\Psi$ is a~bijection: ${\mathcal A}^{*}\to {\mathcal A}^{*}$,
\item for all $w,v \in {\mathcal A}^{*}$
$$\#\{\text{occurrences of $w$ in $v$}\}=\#\{\text{occurrences of $\Psi(w)$ in $\Psi(v)$}\}.$$
\end{enumerate}
\begin{thm}\label{reflection_letterpermutation}
Let $\Psi: {\mathcal A}^*\to {\mathcal A}^*$. Then $\Psi$ is a~symmetry if and only if $\Psi$ is a~morphism or an antimorphism such that $\Psi$ is a~letter permutation when restricted to $\mathcal A$.
\end{thm}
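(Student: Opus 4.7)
My plan is to prove both implications; the bulk of the work lies in ($\Rightarrow$).

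For the easy direction ($\Leftarrow$), suppose $\Psi$ is a morphism or antimorphism whose restriction to $\mathcal A$ is a permutation $\pi$. Then $\Psi$ is length-preserving with two-sided inverse given by the analogous morphism/antimorphism built from $\pi^{-1}$, so it is a bijection on $\mathcal A^*$; and each occurrence of a factor $w$ at position $i$ in a word $v$ of length $n$ maps bijectively to an occurrence of $\Psi(w)$ in $\Psi(v)$---at position $i$ in the morphism case and at position $n - |w| - i + 2$ in the antimorphism case---establishing condition~(2).

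For the hard direction ($\Rightarrow$), my first step is to pin down $\Psi$ on short inputs. Using that $\Psi$ is a bijection, if $\Psi(\varepsilon) = u \neq \varepsilon$ and $w_0 = \Psi^{-1}(\varepsilon)$, then condition~(2) with $w = \varepsilon$ and $v = w_0$ would require $|w_0|+1$ occurrences of the nonempty word $u$ inside $\Psi(w_0) = \varepsilon$, which is absurd; hence $\Psi(\varepsilon) = \varepsilon$. Condition~(2) with $w = \varepsilon$ then yields $|\Psi(v)| = |v|$ for every $v$, so $\Psi|_{\mathcal A}$ is an injection $\mathcal A \to \mathcal A$, namely a permutation $\pi$ of $\mathcal A$.

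Next I analyze the action of $\Psi$ on two-letter words. For distinct $a, b$, condition~(2) with single letters forces $\Psi(ab) \in \{\pi(a)\pi(b), \pi(b)\pi(a)\}$, and I call these possibilities \emph{morphism-type} and \emph{antimorphism-type} for the pair $(a,b)$. Bijectivity immediately makes $(a,b)$ and $(b,a)$ share a type. For three distinct letters $a, b, c$, $\Psi(abc)$ has length three, contains each of $\pi(a), \pi(b), \pi(c)$ exactly once, and must display both $\Psi(ab)$ and $\Psi(bc)$ as length-two factors; a brief case check on the two admissible placements shows that a mixed type on the adjacent pairs produces a mismatch at the shared middle position. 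Propagating through the alphabet, a single type applies uniformly to all pairs, so $\Psi$ agrees with a morphism or an antimorphism on every length-two word.

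It remains to extend to arbitrary length by induction on $|v|$, focusing on the morphism case (antimorphism being symmetric). Assume $\Psi(w) = \pi(w_1)\cdots\pi(w_k)$ for every $|w| \leq n$, and let $v = v_1\cdots v_{n+1}$ with $P = \pi(v_1)\cdots\pi(v_n)$ and $S = \pi(v_2)\cdots\pi(v_{n+1})$. Condition~(2) forces $P$ and $S$ to appear in $\Psi(v)$ with the same multiplicities as $v_1\cdots v_n$ and $v_2\cdots v_{n+1}$ in $v$, so $\Psi(v)$ is either the \emph{straight} word $\pi(v_1)\cdots\pi(v_{n+1})$ or a \emph{swapped} word with prefix $S$ and suffix $P$; the latter is geometrically possible only when $v$ has period two. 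The main obstacle is ruling out the swapped subcase for alternating non-constant $v$: I do so by applying condition~(2) to $w = v_1$ when $|v|$ is odd (the swapped candidate has the wrong single-letter count of $\pi(v_1)$) or to $w = v_1v_2$ when $|v|$ is even (the swapped candidate has the wrong count of $\pi(v_1)\pi(v_2)$). This closes the induction and finishes the proof.
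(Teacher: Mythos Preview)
Your proof is correct and follows the same overall architecture as the paper's: first show that a symmetry is length-preserving and permutes the alphabet (your argument for $\Psi(\varepsilon)=\varepsilon$ and $|\Psi(v)|=|v|$ is essentially Lemma~\ref{bijection_n}), then argue by induction on word length that $\Psi$ acts as a morphism or an antimorphism (Lemma~\ref{all_bijections}).

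The main difference is that you insert an explicit analysis at length~$2$: you classify each ordered pair of distinct letters as ``morphism-type'' or ``antimorphism-type'', use bijectivity to match $(a,b)$ with $(b,a)$, and use a three-letter word to force adjacent pairs to share a type, so that a single type holds uniformly before you start the induction. The paper, by contrast, jumps straight into the induction from $n=1$ and, in the ``remaining case'' where $\Psi(w)=\Psi(w_2)\cdots\Psi(w_n)\Psi(w_1)=\Psi(w_n)\Psi(w_1)\cdots\Psi(w_{n-1})$, asserts that bijectivity forces $w_1=\cdots=w_n$. In fact that comparison only yields $w_{j+2}=w_j$ with indices taken modulo~$n$, so for even~$n$ it gives merely period~$2$; and at $n=2$ the two displayed expressions coincide and impose no constraint at all. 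Your separate treatment of length~$2$ and your explicit elimination of the period-$2$ ``swapped'' candidate (via the count of $\pi(v_1)$ when $|v|$ is odd, and of $\Psi(v_1v_2)=\pi(v_1)\pi(v_2)$ when $|v|$ is even) are exactly what is needed to close these gaps. In short, your route is the same in spirit but more careful, and it supplies the details the paper glosses over.
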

The proof of Theorem~\ref{reflection_letterpermutation} is obtained when putting together the following two lemmas.
\begin{lem}\label{bijection_n}
Let $\Psi$ be a~symmetry on ${\mathcal A}^{*}$ and let $w \in {\mathcal A}^*$. Then
$|\Psi(w)|=|w|$.
\end{lem}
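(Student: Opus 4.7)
The plan is to exploit the elementary fact that a word cannot occur inside a strictly shorter word: this alone, combined with the bijection property, will force $\Psi$ to preserve length.

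\medskip

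First I would prove the monotonicity statement: if $|v|<|w|$, then $|\Psi(v)|<|\Psi(w)|$. Indeed, $|v|<|w|$ implies $\#\{\text{occurrences of }w\text{ in }v\}=0$, and the defining property of a symmetry then gives $\#\{\text{occurrences of }\Psi(w)\text{ in }\Psi(v)\}=0$, which is possible only if $|\Psi(w)|>|\Psi(v)|$. Next I would note that $\Psi^{-1}$ is itself a symmetry (it is a bijection, and substituting $w\mapsto\Psi^{-1}(w),\ v\mapsto\Psi^{-1}(v)$ into the symmetry condition for $\Psi$ yields the same condition for $\Psi^{-1}$). Applying the previous observation to $\Psi^{-1}$ yields the converse implication, so
\[
|v|<|w|\ \Longleftrightarrow\ |\Psi(v)|<|\Psi(w)|,\qquad\text{hence}\qquad |v|=|w|\ \Longleftrightarrow\ |\Psi(v)|=|\Psi(w)|.
\]

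Therefore $\Psi$ bijectively sends each length class $\mathcal{L}_n=\{w\in\mathcal{A}^*:|w|=n\}$ onto some length class $\mathcal{L}_{m(n)}$, with $m:\mathbb{N}\to\mathbb{N}$ a strictly increasing bijection. To conclude $m(n)=n$, I would split on the alphabet size. If $\#\mathcal{A}\geq 2$, then $\#\mathcal{L}_n=(\#\mathcal{A})^n$ is strictly increasing in $n$, so the bijection $\mathcal{L}_n\to\mathcal{L}_{m(n)}$ forces $(\#\mathcal{A})^n=(\#\mathcal{A})^{m(n)}$ and hence $m(n)=n$. If $\#\mathcal{A}=1$, then $m$ is a strictly increasing bijection of $\mathbb{N}$, which is necessarily the identity. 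Either way, $|\Psi(w)|=|w|$.

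\medskip

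I do not expect a real obstacle; the only point that deserves a moment of care is the single-letter alphabet case, where the counting argument on $\#\mathcal{L}_n$ degenerates, and instead one must invoke monotonicity of $m$ together with bijectivity on all of $\mathbb{N}$.
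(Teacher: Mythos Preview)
Your argument breaks down at the very first step. From $|v|<|w|$ you correctly deduce that $w$ has zero occurrences in $v$, and hence $\Psi(w)$ has zero occurrences in $\Psi(v)$. But the conclusion ``which is possible only if $|\Psi(w)|>|\Psi(v)|$'' is simply false: a word can perfectly well have zero occurrences in a longer word. Over $\{a,b\}$, for instance, $a$ has no occurrence in $bbb$, yet $|a|<|bbb|$. So the implication $|v|<|w|\Rightarrow|\Psi(v)|<|\Psi(w)|$ is not established, and everything built on it (the length-class bijection, the cardinality comparison) is unsupported.

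The paper's route avoids this trap entirely: it first shows $\Psi(\varepsilon)=\varepsilon$ (since nothing occurs in $\varepsilon$, nothing can occur in $\Psi(\varepsilon)$ either), then argues that $\Psi$ restricted to $\mathcal A$ is a permutation of $\mathcal A$, and finally computes
\[
|w|=\sum_{a\in\mathcal A}\#\{\text{occ.\ of }a\text{ in }w\}=\sum_{a\in\mathcal A}\#\{\text{occ.\ of }\Psi(a)\text{ in }\Psi(w)\}=|\Psi(w)|.
\]
If you want to salvage your own strategy, you need a genuine reason why $\Psi$ cannot shrink lengths; the zero-occurrence observation alone does not supply it. One clean patch in the spirit of your approach is to first prove $\Psi(\varepsilon)=\varepsilon$ as the paper does, and then use the count of occurrences of $\varepsilon$ (which, with the paper's indexing, equals the length of the ambient word) to get $|\Psi(w)|=|w|$ directly---at which point the detour through monotonicity and cardinalities becomes unnecessary.
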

\begin{proof}
Since $\#\{\text{occurrences of $\Psi(w)$ in
$\Psi(\varepsilon)$}\}=\#\{\text{occurrences of $w$ in
$\varepsilon$}\}=0$ for every $w \in {\mathcal A}^{*}$, it follows that $\Psi(\varepsilon)=\varepsilon$.

Since $\Psi$ is a~bijection, for every letter $a \in \mathcal A$, there exists a~unique $w \in {\mathcal A}^{*}$ such
that $\Psi(w)=a$, where $w \not = \varepsilon$. If we denote ${\mathcal A}=\{a_1, \dots, a_m\}$, then using Property~(2), it is easy to show that there exists a~permutation $\pi \in S_m$ such that $\Psi(a_k)=a_{\pi(k)}$ for all $k \in \{1,\dots,
m\}$.

Let us now take an arbitrary $w \in {\mathcal A}^{*}$, then
using the fact that $\Psi$ restricted to $\mathcal A$ is a~letter permutation and applying Property~$(2)$, we have
$$|w|=\sum_{a \in {\mathcal A}}\#\{\text{occurrences of $a$ in $w$}\}=
\sum_{a \in {\mathcal A}}\#\{\text{occurrences of $\Psi(a)$ in $\Psi(w)$}\}=|\Psi(w)|.$$
\end{proof}

Using Lemma~\ref{bijection_n} and the definition of symmetry, it is seen
for every $w_1w_2\dots w_n \in {\mathcal A}^*$, $w_i \in {\mathcal A}$, that the following equation is valid
\begin{equation}\label{symmEq}
\Psi(w_1w_2\dots w_n)=\Psi(w_{\sigma(1)})\Psi(w_{\sigma(2)})\dots \Psi(w_{\sigma(n)})
\end{equation}
for some permutation $\sigma \in S_n$.
The next lemma claims that the permutation $\sigma$ is necessarily either
the identical permutation $(1\ 2\ \dots \ n)$ or the symmetric permutation $(n\ \dots \ 2\ 1)$.

\begin{lem}\label{all_bijections}
Let $\Psi$ be a~symmetry on ${\mathcal A}^*$. Then $\Psi$ is either a~morphism or an antimorphism.
\end{lem}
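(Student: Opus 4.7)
The plan is to exploit equation~\eqref{symmEq}, which provides for every word $w=w_1\cdots w_n$ a permutation $\sigma=\sigma_w\in S_n$ with $\Psi(w)=\Psi(w_{\sigma(1)})\cdots\Psi(w_{\sigma(n)})$. My aim is to show that $\sigma_w$ is either always the identity (so $\Psi$ is a morphism) or always the reversal $(n,n-1,\ldots,1)$ (so $\Psi$ is an antimorphism), and that the choice is independent of $w$.

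First I would handle length two. For distinct $a,b\in\mathcal A$ equation~\eqref{symmEq} forces $\Psi(ab),\Psi(ba)\in\{\Psi(a)\Psi(b),\Psi(b)\Psi(a)\}$; bijectivity of $\Psi$ prevents these images from coinciding, so exactly one of the two orders is realized on each. In particular there is a well-defined ``morphism-type'' or ``antimorphism-type'' attached to each unordered pair $\{a,b\}$.

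Second, I would establish that this type is global. Assuming $\Psi(ab)=\Psi(a)\Psi(b)$, and letting $c$ be a third letter distinct from $a,b$, the length-three image $\Psi(abc)$ is a permutation of $\Psi(a)\Psi(b)\Psi(c)$ containing $\Psi(ab)=\Psi(a)\Psi(b)$ as a factor, hence equals $\Psi(a)\Psi(b)\Psi(c)$ or $\Psi(c)\Psi(a)\Psi(b)$. Only the first contains a candidate for the required factor $\Psi(bc)$, so we deduce $\Psi(abc)=\Psi(a)\Psi(b)\Psi(c)$ and $\Psi(bc)=\Psi(b)\Psi(c)$. An analogous four-letter analysis on $abcd$ with four distinct letters excludes ``mixed'' behavior on two disjoint pairs, so $\Psi(xy)=\Psi(x)\Psi(y)$ holds for every pair of distinct letters (the antimorphism case being completely symmetric).

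Finally, I would finish by induction on $n$. Assuming $\Psi(v_1\cdots v_k)=\Psi(v_1)\cdots\Psi(v_k)$ for all $k<n$, I consider $w=w_1\cdots w_n$ and note that the length-$(n-1)$ factors $w_1\cdots w_{n-1}$ and $w_2\cdots w_n$ have images $\Psi(w_1)\cdots\Psi(w_{n-1})$ and $\Psi(w_2)\cdots\Psi(w_n)$, both of which must occur as length-$(n-1)$ factors of the length-$n$ word $\Psi(w)$. Case analysis on their starting positions in $\Psi(w)$, combined with bijectivity of $\Psi$ to rule out the misaligned configurations, forces $\Psi(w)=\Psi(w_1)\cdots\Psi(w_n)$ (the other configurations degenerate to $w=w_1^n$, for which both orders coincide). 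The main obstacle I expect is the second step: ruling out ``mixed'' morphism/antimorphism behavior on disjoint letter pairs via the length-four word $abcd$ is the delicate ingredient; once global consistency is secured, the length-$n$ induction is essentially mechanical.
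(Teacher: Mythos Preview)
Your approach is close in spirit to the paper's: both argue by induction on word length and exploit that the images of the two length-$(n-1)$ factors of $w$ must occur inside $\Psi(w)$. The paper does not isolate your length-$2$/length-$3$ consistency step; it jumps straight into the induction assuming $\Psi$ acts as a morphism on \emph{all} words of length $n-1$, so your explicit global-consistency argument is actually more careful than what the paper writes down. Incidentally, the length-$4$ analysis you anticipate is unnecessary: once $\{a,b\}$ is of morphism type, applying your length-$3$ argument to $abc$ and to $cab$ already forces both $\{b,c\}$ and $\{a,c\}$ to be of morphism type, and one propagates through a common letter to reach every pair.

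There is, however, one genuine gap in your inductive step (and the paper's own argument shares it). The ``misaligned'' configuration---$\Psi(w_1)\cdots\Psi(w_{n-1})$ the suffix and $\Psi(w_2)\cdots\Psi(w_n)$ the prefix of $\Psi(w)$---does \emph{not} degenerate to $w=w_1^{\,n}$ via letter-bijectivity alone. Comparing positions yields only $w_i=w_{i+2}$ for all $i$, so for even $n$ one is left with $w=(ab)^{n/2}$, $a\ne b$, and $\Psi(w)=(\Psi(b)\Psi(a))^{n/2}$, which no amount of bijectivity on letters excludes. What does exclude it is the occurrence-count property of a symmetry applied to the length-$2$ factor $ab$: it occurs $n/2$ times in $w$, whereas $\Psi(ab)=\Psi(a)\Psi(b)$ (already secured by your second step) occurs only $n/2-1$ times in $(\Psi(b)\Psi(a))^{n/2}$. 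So the repair is immediate, but it requires Property~(2) of a symmetry, not merely bijectivity.
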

\begin{proof}
We have to prove that
$\Psi(w)=\Psi(w_1)\Psi(w_2)\dots \Psi(w_n)$ for every $w=w_1 w_2 \dots w_n \in
{\mathcal A}^*$, $w_i \in {\mathcal A}$, or $\Psi(w)=\Psi(w_n)\dots \Psi(w_2)\Psi(w_1)$ for every
$w=w_1 w_2 \dots w_n \in {\mathcal A}^*$, $w_i \in {\mathcal A}$.

Let us proceed by induction on the length $n$ of $w$. The case $n=1$ is clear.
Suppose that $\Psi(w)=\Psi(w_1)\Psi(w_2)\dots \Psi(w_{n-1})$ for every $w=w_1 w_2 \dots w_{n-1} \in {\mathcal A}^*$ of length $n-1, \ n\geq 2$. Take an arbitrary word $w=w_1
w_2\dots w_n \in {\mathcal A}^*$. Then, as $\Psi$ is a~symmetry, $\Psi(w_2\dots w_n)$ is a~factor
of $\Psi(w_1w_2\dots w_n)$, in more precise terms, $\Psi(w_2\dots w_n)$ is either a~prefix or a~suffix of $\Psi(w_1\dots w_n)$.
Moreover, if $w_1$ occurs in $w_2\dots w_n$ $\ell$ times, $w_1$ occurs in $w_1w_2\dots w_n$ $(\ell+1)$ times.
Since $\Psi$ is a~symmetry, it follows that $\Psi(w_1)$ occurs $\ell$ times in $\Psi(w_2\dots w_n)$ and $(\ell+1)$ times in $\Psi(w_1w_2\dots w_n)$.
These two observations result in
$$\Psi(w_1 w_2\dots w_n)=\Psi(w_1)\Psi( w_2\dots w_n)=\Psi(w_1)\Psi( w_2)\dots\Psi(w_n) $$
  $$\text{or}$$
  $$\Psi(w_1 w_2\dots  w_n)=\Psi(w_2\dots w_n)\Psi(w_1)=\Psi(w_2)\dots \Psi(w_n)\Psi(w_1).$$
The first case means that $\Psi$ is a~morphism. Let us treat the second case.
Similar reasoning as before leads to
$$\Psi(w_1 w_2 \dots w_n)=\Psi(w_1\dots w_{n-1})\Psi(w_n)=\Psi(w_1)\Psi( w_2)\dots\Psi(w_n)$$
 $$\text{or}$$
 $$\Psi(w_1 w_2\dots w_n)=\Psi(w_n) \Psi(w_1\dots w_{n-1})=\Psi(w_n) \Psi(w_1)\dots \Psi(w_{n-1}).$$
The first case again means that $\Psi$ is a~morphism.
The only case which remains is $\Psi(w)=\Psi(w_2)\dots\Psi(w_n)\Psi(w_1)=\Psi(w_n) \Psi(w_1)\dots\Psi(w_{n-1})$.
Since $\Psi$ is a~bijection, we get $w_1=w_2=\dots = w_n$. Hence, again $\Psi(w)=\Psi(w_1)\Psi(w_2)\dots \Psi(w_n)$.

With the same reasoning, we deduce that if $\Psi(w)=\Psi(w_{n-1})\dots
\Psi(w_2)\Psi(w_1)$ for every $w=w_1 w_2 \dots w_{n-1} \in {\mathcal
A}^*, \ n\geq 2$, then for an arbitrary $w=w_1 w_2 \dots w_{n} \in {\mathcal A}^*$, $w_i \in {\mathcal A}$, we get $\Psi(w)=\Psi(w_n)\dots \Psi(w_2)\Psi(w_1)$.

\end{proof}
\begin{obs}\label{same_frequency}
Let $\mathbf u$ be an infinite word whose language is invariant under a~symmetry $\Psi$.
For every $w$ in $\L(\mathbf u)$ whose frequency exists, it holds
$$\rho(w)=\rho(\Psi(w)).$$
\end{obs}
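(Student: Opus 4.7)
The plan is to use the two defining properties of a symmetry together with Lemma~\ref{bijection_n}. Recall that $\rho(w)$ is the limit as $|v|\to\infty$, $v\in\L(\mathbf u)$, of the ratio $\#\{\text{occurrences of }w\text{ in }v\}/|v|$. If I show that $\Psi$ restricts to a length-preserving bijection of $\L(\mathbf u)$ onto itself, then for $v'=\Psi(v)$ I will have $|v'|=|v|$ by Lemma~\ref{bijection_n}, and
$$\#\{\text{occurrences of }\Psi(w)\text{ in }v'\}=\#\{\text{occurrences of }w\text{ in }v\}$$
by Property~(2) of a symmetry. Consequently the two limits (defining $\rho(w)$ and $\rho(\Psi(w))$) range over the same collection of quotients and over sequences of lengths tending to infinity in tandem, so the limit $\rho(\Psi(w))$ exists and equals $\rho(w)$.

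The only point that needs verification is that $\Psi$ maps $\L(\mathbf u)$ \emph{onto} $\L(\mathbf u)$, not merely into it. This is where Lemma~\ref{bijection_n} is used a second time: since $\Psi$ is a bijection of $\mathcal A^*$ by Property~(1) and preserves length, it restricts to an injection $\L_n(\mathbf u)\hookrightarrow \mathcal A^n$; invariance of $\L(\mathbf u)$ under $\Psi$ places its image inside the finite set $\L_n(\mathbf u)$. An injection of a finite set into itself is a bijection, so $\Psi$ permutes $\L_n(\mathbf u)$ for every $n$. In particular, every $v'\in\L(\mathbf u)$ is of the form $\Psi(v)$ for a unique $v\in\L(\mathbf u)$ with $|v|=|v'|$, which is exactly the substitution needed in the first paragraph.

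I do not expect any real obstacle: the statement is essentially a bookkeeping consequence of the symmetry axioms and Lemma~\ref{bijection_n}. The one conceptual point worth emphasising in the write-up is the finiteness of $\L_n(\mathbf u)$, which upgrades the injectivity of $\Psi|_{\L_n(\mathbf u)}$ to surjectivity without any appeal to finite order of $\Psi$ or to the group structure discussed later in the paper.
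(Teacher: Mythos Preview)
The paper records this statement as an Observation and gives no proof, so there is no argument of its own to compare against. Your write-up is correct and supplies exactly the details one would want: the use of Property~(2) and Lemma~\ref{bijection_n} to match the counting ratios, together with the finiteness of $\L_n(\mathbf u)$ to upgrade injectivity of $\Psi|_{\L_n(\mathbf u)}$ to a bijection (so that every long $v'\in\L(\mathbf u)$ is $\Psi(v)$ for some $v\in\L(\mathbf u)$), is the right way to make the limit argument rigorous without appealing to finite order of $\Psi$ or to any group structure.
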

\begin{pozn}
If a~finite set $G$ is a~submonoid of $AM({\mathcal A}^*)$, then $G$ is a~group and any its member restricted to the set of words of length one is just a~permutation on the alphabet $\mathcal A$. In other words, $G$ is a~finite group of symmetries. Words with languages invariant under all elements of such a~group $G$ of symmetries
have been studied in~\cite{PeSt}.
\end{pozn}
\section{Factor frequencies of languages invariant under more symmetries}
Assume $\mathbf u$ is an infinite word over an alphabet $\mathcal A$ with $\#{\mathcal A}\geq 2$ whose language is invariant under all elements of a~finite group $G \subset AM({\mathcal A}^*)$ of symmetries containing an antimorphism.
Let us summarize some observations concerning the group $G$ of symmetries and reduced Rauzy graphs of $\mathbf u$. These observations constitute all tools we need for the proof of the main theorem of this paper - Theorem~\ref{UpperBoundMoreSymmetries}.\\
\noindent {\bf Observations}:
\begin{enumerate}
\item Let $\theta$ be an antimorphism in $G$. The mapping $\Psi \to \theta\Psi$ is a~bijection on $G$ satisfying
$$\text{$\Psi \in G$ is a~morphism \ $\Leftrightarrow$ \ $\theta\Psi\in G$ is an antimorphism.}$$
This implies that $G$ containing an antimorphism has an even number of elements, i.e., $\#G=2k$.
\item For a~factor $w$ containing all letters of $\mathcal A$, the following properties can be easily verified:
\begin{enumerate}
\item for any distinct antimorphisms $\theta_1, \theta_2 \in G$, we have $\theta_1(w)\not =\theta_2(w)$,
\item for any distinct morphisms $\varphi_1, \varphi_2 \in G$, we have $\varphi_1(w)\not =\varphi_2(w)$.
\end{enumerate}
\item If $w$ is a~$\theta$-palindrome containing all letters of $\mathcal A$ for an antimorphism $\theta \in G$, then $\theta$ is an involution, i.e., $\theta^2=\rm{Id}$.
\item In a~reduced Rauzy graph of $\mathbf u$, if there is an edge $e$ between two vertices $w$ and $v$, where $w$ and $v$ contain all letters of $\mathcal A$, then
    \begin{enumerate}
    \item either $e$ is a~$\theta$-palindrome for some antimorphism $\theta \in G$, then there exist at least $k$ distinct edges having the same label $\rho(e)$, namely edges
$\varphi(e)$ for all morphisms in $G$;
\item or $e$ is not a~$\theta$-palindrome for any antimorphism $\theta \in G$, then there exist at least $2k$ distinct edges having the same label $\rho(e)$, namely edges
$\varphi(e)$ for all morphisms in $G$ and $\theta(e)$ for all antimorphisms in $G$.
\end{enumerate}
\item On one hand, if an edge $e$ in the reduced Rauzy graph $\tilde{\Gamma}_n$ is mapped by
$\theta$ onto itself, then the corresponding simple path has
a~$\theta$-palindromic central factor of length $n$ or $n+1$. On the other hand, every
$\theta$-palindrome contained in ${\mathcal L}_{n+1}(\mathbf u)$ is the central factor of a~simple path
mapped by $\theta$ onto itself and every $\theta$-palindrome of length $n$ is
either the central factor of a~simple path mapped by $\theta$ onto
itself or is a~special factor (thus, evidently, a~BS factor).
\end{enumerate}

\begin{thm}\label{UpperBoundMoreSymmetries} Let $G \subset AM({\mathcal A}^*)$ be a~finite group containing an antimorphism and let $\mathbf u$ be a~uniformly recurrent aperiodic infinite word whose language is invariant under all elements of $G$ and such that the frequency $\rho(w)$ exists for every factor $w \in \L(\mathbf u)$.
Then there exists $N \in \mathbb N$ such that
\[\# \{\rho(e)\mid  e
\in \L_{n+1}(\mathbf u) \}\quad \leq \quad \frac{1}{\#G}\Bigl(4\Delta \C(n)+\#G- X-Y \Bigr) \quad \quad \text{for all $n \geq N$},\]
where $X$ is the number of BS factors of length $n$ and $Y$ is
the number of BS factors of length $n$ that are $\theta$-palindromes for an antimorphism $\theta \in G$.
\end{thm}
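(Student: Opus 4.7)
The plan is to choose $N$ large enough (using uniform recurrence of $\mathbf u$) so that every factor of $\mathbf u$ of length $\geq N$ contains every letter of $\mathcal A$. For $n \geq N$, Observations~2--5 then apply to all factors, edges, and simple paths appearing in $\tilde{\Gamma}_n$, and by Remark~\ref{weaker_assumption} the bound of Theorem~\ref{odhadPal} is available at length $n$. Write $k := \#G/2$, and let $G^{+} \subset G$ denote the subgroup of morphisms, which has cardinality~$k$.

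The main step is to overcount distinct edge labels of $\tilde{\Gamma}_n$ by the number of $G$-orbits of edges: all edges of a $G$-orbit share the same frequency by Observation~\ref{same_frequency}. By Observation~2(b), $G^{+}$ acts freely on every edge, so the $G^{+}$-orbit of any edge $e$ has size $k$. If $\theta(e) = e$ for some antimorphism $\theta \in G$, then conjugating $\theta$ by any morphism of $G^{+}$ yields an antimorphism fixing the corresponding image, so the whole $G$-orbit of $e$ consists of palindromic edges and still has size $k$ (the stabilizer being $\{\mathrm{Id}, \theta\}$); otherwise, by Observation~2(a), the $G$-orbit has size $2k$. Writing $E$ for the number of edges in $\tilde{\Gamma}_n$ and $E_p$ for the number of palindromic ones, this gives
\begin{equation*}
\# \{\rho(e) \mid e \in \L_{n+1}(\mathbf u)\} \ \leq \ \frac{E_p}{k} + \frac{E - E_p}{2k} \ = \ \frac{E + E_p}{\#G}.
\end{equation*}

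It then remains to bound $E$ and $E_p$ separately. Combining \eqref{edge_vertex} with \eqref{odhadRS} yields $E \leq 3\Delta\C(n) - X$. To bound $E_p$, note that by Observation~3 only involutive antimorphisms can fix an edge, and by Observation~2(a) each palindromic edge is fixed by exactly one $\theta \in G^{(2)}$. Observation~5 then identifies, for each $\theta \in G^{(2)}$, the set of $\theta$-palindromic edges of $\tilde{\Gamma}_n$ with the disjoint union of the $\P_\theta(n+1)$ $\theta$-palindromes of length $n+1$ and those $\theta$-palindromes of length $n$ that are \emph{not} BS (an LS or RS $\theta$-palindrome is automatically BS, since $\theta$ swaps left and right extensions). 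Summing over $\theta$ and invoking Theorem~\ref{odhadPal} via Remark~\ref{weaker_assumption},
\begin{equation*}
E_p \ = \ \sum_{\theta \in G^{(2)}} \bigl(\P_\theta(n) + \P_\theta(n+1)\bigr) - Y \ \leq \ \Delta\C(n) + \#G - Y.
\end{equation*}
Substituting both estimates into the orbit-counting inequality produces the claimed bound.

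The main obstacle I expect is the orbit bookkeeping: pinning down the stabilizer of a $\theta$-palindromic edge as exactly $\{\mathrm{Id},\theta\}$, verifying that conjugating $\theta$ by morphisms keeps the whole $G$-orbit palindromic, and translating $\theta$-palindromic simple paths via Observation~5 into $\theta$-palindromes of length $n$ or $n+1$ without double counting, while discounting precisely the BS $\theta$-palindromes of length $n$ (which are vertices rather than interior points of any simple path).
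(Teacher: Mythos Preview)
Your proof is correct and follows essentially the same route as the paper: your $E_p$ and $E-E_p$ are precisely the paper's $A$ and $B$, your orbit-size dichotomy ($k$ versus $2k$) is Observation~(4), and your formula $E_p=\sum_{\theta\in G^{(2)}}(\P_\theta(n)+\P_\theta(n+1))-Y$ is the paper's equation~\eqref{A}, after which both arguments finish by combining~\eqref{AplusB} with Theorem~\ref{odhadPal}. The only cosmetic differences are that you frame the counting explicitly via $G$-orbits and stabilizers (making Observation~(4) slightly more precise), and you cite Theorem~\ref{odhadPal} through Remark~\ref{weaker_assumption} when in fact your choice of $N$ already gives its hypothesis directly.
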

\begin{proof}
Since $\mathbf u$ is uniformly recurrent, we can find $N$ such that any factor of length $N$ contains all letters of $\mathbf u$. Let $\tilde{\Gamma}_n$ be the reduced Rauzy graph of $\mathbf u$ of order $n \geq N$.
We know already that the set of edge labels of $\tilde{\Gamma}_n$ is
equal to the set of edge labels of $\Gamma_n$. It is easy to see that any element of $G$ is an automorphism of $\tilde{\Gamma}_n$, i.e., $G$ maps the graph $\tilde{\Gamma}_n$ onto itself.

Let us denote by $A$ the number of edges $e$ in $\tilde{\Gamma}_n$ such that $e$ is mapped by a~certain antimorphism of $G$ onto itself (such an antimorphism is involutive by Observation~$(3)$)
and by $B$ the number of edges $e$ in $\tilde{\Gamma}_n$ such that
$e$ is not mapped by any antimorphism of $G$ onto itself, then
\begin{equation}\label{AplusB2} \# \{ e \mid
e \ \mbox{edge in} \ \tilde {\Gamma}_n\}=A+B\leq 3\Delta{\mathcal C}(n)-X,
\end{equation}
where the upper bound is taken from~\eqref{AplusB}.
We get, using Observations (3) and (5), the following formula
\begin{equation}\label{A} A=\sum_{\theta \in G^{(2)}}\bigl(\P_{\theta}(n)+\P_{\theta}(n+1)\bigr)-\sum_{\theta \in G^{(2)}}\#\{w \in \L_n(\mathbf u) \mid  w
=\theta(w) \ \text{and $w$ BS } \}, \end{equation} where we subtract the
number of BS factors of $\L_n(\mathbf u)$ that are $\theta$-palindromes for a~certain antimorphism $\theta$, in the
statement denoted by $Y$, since they are not central factors of any
simple path. If $\# G=2k$, then for every edge $e$ in $\tilde{\Gamma}_n$ that is mapped by a~certain antimorphism $\theta \in G$ onto itself, there are at least $k$ different edges with the same label $\rho(e)$ by Observation $(4 a)$.

Now, let us turn our attention to those edges of $\tilde{\Gamma}_n$ which
are not mapped by any antimorphism of $G$ onto themselves. For every such edge $e$, at
least $2k$ edges have the same label
$\rho(e)$ by Observation $(4 b)$. These considerations lead to the following estimate
\begin{equation}\label{A_AB} \# \{  \rho(e) \mid   e \in \L_{n+1}(\mathbf u)\} \leq
\frac{1}{k}A+\tfrac{1}{2k}B=\tfrac{1}{2k}A+\tfrac{1}{2k}(A+B). \end{equation}
Putting together~\eqref{A}, \eqref{AplusB2}, \eqref{A_AB}, and Theorem~\ref{odhadPal}, the statement is proven.
\end{proof}
\begin{pozn}
If an infinite word $\mathbf u$ is closed under reversal, then $G=\{\rm{Id}, R\}$ and the new upper bound from Theorem~\ref{UpperBoundMoreSymmetries} coincides with the estimate from Theorem~\ref{UpperBoundReversalClosed}.
\end{pozn}
\begin{pozn} It is easy to show that Theorem~\ref{UpperBoundMoreSymmetries}
will stay true if we replace the assumption of uniform recurrence with the weaker (however more technical) assumption from Remark~\ref{weaker_assumption}.
\end{pozn}

Finally, if we want to have a~simpler upper bound on factor frequencies,
we can use the following one, which is slightly rougher than the estimate from Theorem~\ref{UpperBoundMoreSymmetries}.
\begin{coro}
Let $G \subset AM({\mathcal A}^*)$ be a~finite group containing an antimorphism and let $\mathbf u$ be a~uniformly recurrent infinite word whose language is invariant under all elements of $G$ and such
that the frequency $\rho(w)$ exists for every factor $w \in \L(\mathbf u)$. Then there exists $N \in \mathbb N$ such that $$\# \{\rho(e)\mid  e \in \L_{n+1}(\mathbf u) \}\quad \leq \quad \frac{4}{\#G}\Delta C(n)+1 \quad \quad \text{for all $n \geq N$}.$$ The equality holds for all sufficiently
large $n$ if and only if $\mathbf u$ is purely periodic.
\end{coro}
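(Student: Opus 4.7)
The plan is to read the corollary off Theorem~\ref{UpperBoundMoreSymmetries} by discarding the non-negative correction terms $X$ and $Y$, with a separate verification in the purely periodic case and a short argument for the equality characterization.

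First I would split on whether $\mathbf u$ is aperiodic. In the aperiodic case, Theorem~\ref{UpperBoundMoreSymmetries} supplies some $N \in \mathbb N$ such that for all $n \geq N$,
\[
\#\{\rho(e) \mid e \in \L_{n+1}(\mathbf u)\}\ \leq\ \tfrac{1}{\#G}\bigl(4\Delta\C(n) + \#G - X - Y\bigr)\ \leq\ \tfrac{4}{\#G}\Delta\C(n) + 1,
\]
where the second inequality uses $X, Y \geq 0$. If instead $\mathbf u$ is purely periodic with minimal period $p$, then for every $n \geq p$ one has $\C(n) = \C(n+1) = p$, hence $\Delta\C(n) = 0$; each factor of length $n+1$ occurs with frequency $1/p$, so $\#\{\rho(e) \mid e \in \L_{n+1}(\mathbf u)\} = 1$. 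Both sides of the claimed inequality equal $1$, so equality holds for all $n \geq p$. In either case the estimate is established.

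The two cases also yield the equality characterization. The ``if'' direction is precisely the second case. For ``only if'', suppose $\mathbf u$ is aperiodic and equality holds for all sufficiently large $n$. The displayed chain then forces $X(n) + Y(n) = 0$ for all large $n$, meaning that no BS factor of length $n$ exists beyond some threshold. I would contradict this using the return-word structure: since $\mathbf u$ is aperiodic, $\Delta\C(n) \geq 1$, so one finds LS factors of arbitrarily large length. If such an LS factor $u$ were not RS, its unique right extension $ua$ would again be LS (because $\mathrm{Lext}(ua) \supseteq \mathrm{Lext}(u)$ by uniqueness of the right extension); iterating produces an infinite sequence of letters $a_1 a_2 \ldots$ that is a common prefix of every return word to $u$. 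This contradicts the finiteness of the return-word set for $u$ guaranteed by uniform recurrence of $\mathbf u$. Hence BS factors of length $n$ exist for arbitrarily large $n$, forcing strict inequality for infinitely many $n$ and completing the ``only if'' direction.

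The main obstacle is this last step: ruling out an infinite chain of LS factors that are never RS. The cleanest route is the return-word argument above, which exploits uniform recurrence in an essential way. The remainder of the proof is essentially bookkeeping: a direct substitution into Theorem~\ref{UpperBoundMoreSymmetries} together with the elementary verification in the periodic case.
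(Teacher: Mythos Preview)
The paper states this corollary without proof, treating it as an immediate weakening of Theorem~\ref{UpperBoundMoreSymmetries}; your derivation of the inequality via the aperiodic/periodic case split and the ``if'' direction of the equality claim are exactly what is intended and are correct.

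The one place that needs tightening is the ``only if'' argument. Your chain correctly forces $X(n)=Y(n)=0$ for all large $n$, and the step ``$u$ LS, not RS $\Rightarrow$ $ua$ LS'' is valid (indeed $\mathrm{Lext}(ua)\supseteq\mathrm{Lext}(u)$ since any $b\in\mathrm{Lext}(u)$ gives $bu\in\L(\mathbf u)$, whose unique right extension must be $a$). However, the sentence ``an infinite sequence $a_1a_2\ldots$ that is a common prefix of every return word to $u$, contradicting finiteness of the return-word set'' does not parse: return words are finite, so an infinite word cannot be their prefix, and conversely the fact that every return word is a \emph{prefix} of $a_1a_2\ldots$ does not by itself contradict finiteness of the set. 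The clean way to finish is to observe that the unique infinite right extension means every occurrence of $u$ in $\mathbf u$ is followed by the same one-sided sequence $a_1a_2\ldots$; taking two occurrences $i<j$ (recurrence) gives that the suffixes of $\mathbf u$ at positions $i$ and $j$ coincide, so $\mathbf u$ is eventually periodic with period $j-i$, hence purely periodic by uniform recurrence, contradicting aperiodicity. With this adjustment your proof is complete.
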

\section{Optimality of the upper bound}
In this section, we will illustrate on an example taken from~\cite{PeSt}  that the upper bound from Theorem~\eqref{UpperBoundMoreSymmetries} is attained for every $n \in \mathbb N$, $n
\geq 1$, thus it is an optimal upper bound. The infinite word $\mathbf u$ in question is the fixed point starting in $0$, which is obtained when we iterate the primitive morphism $\varphi$ given by:
\begin{equation}\label{varphi}
\varphi(0)=0130, \quad \varphi(1)=1021, \quad \varphi(2)=102, \quad \varphi(3)=013,
\end{equation}
i.e., for all $n \in \mathbb N$, the word $\varphi^n(0)$ is a~prefix of $\mathbf u$.

The corresponding incidence matrix is of the form $$M_{\varphi}=\left(\begin{matrix} 2&1&1&1\\
1&2&1&1\\
0&1&1&0\\
1&0&0&1
\end{matrix}\right),$$
its dominant eigenvalue is $\lambda=2+\sqrt{3}$ with the corresponding normalized eigenvector $$\frac{1}{2}\left(\begin{matrix} \sqrt{3}-1\\ \sqrt{3}-1\\ 2-\sqrt{3} \\ 2-\sqrt{3}
\end{matrix}\right),$$
hence we get the letter frequencies
$$\rho(0)=\rho(1)=\frac{\sqrt{3}-1}{2}, \quad \rho(2)=\rho(3)=\frac{2-\sqrt{3}}{2}.$$
We also know that the frequencies of all factors exist because of the primitivity of $\varphi$.
In~\cite{PeSt}, the following properties of $\mathbf u$ have been shown:
\begin{enumerate}
\item The language ${\mathcal L}(\mathbf u)$ is closed under the finite group of symmetries $G=\{\rm{Id}, \theta_1, \theta_2, \theta_1\theta_2\}$, where $\theta_1, \theta_2$ are involutive antimorphisms
acting on $\mathcal A$ as follows:
    $$\theta_1: 0\to 1, 1\to 0, 2\to 2, 3 \to 3 \quad \quad \text{and}\quad \quad \theta_2:0\to 0, 1\to 1, 2\to 3, 3 \to 2.$$
\item The first increment of factor complexity satisfies $\Delta {\mathcal C}(n)=2$ for all
$n \in \mathbb N,\ n \geq 1$. Moreover, every LS factor $w$ is a~prefix for some $n \in \mathbb N$,
\begin{itemize}
\item of either $\varphi^n(0)=013010210130130\dots$ and $\rm{Lext}(w)=\{1,3\}$
\item or of $\varphi^n(1)=102101301021021\dots$ and $\rm{Lext}(w)=\{0,2\}$.
\end{itemize}
\item A factor $w$ of $\mathbf u$ is LS if and only if $\theta_i(w)$ is RS for $i \in \{1,2\}$.
\end{enumerate}
In order to find the set of frequencies of factors of any length, we need to describe BS factors of~$\mathbf u$.
By Property~(3), we deduce the relation between BS factors and $\theta_i$-palindromes.
\begin{coro}\label{BSpal} Every nonempty BS factor is a~$\theta_i$-palindrome for one of the indices $i \in \{1,2\}$.
\end{coro}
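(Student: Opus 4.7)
The plan is to leverage the very tight structure given by Properties (2) and (3): on one hand $\Delta\C(n)=2$ and every LS factor has exactly two left extensions, while on the other hand $\theta_1, \theta_2$ exchange LS and RS. This rigidity should force a BS factor to coincide with one of its $\theta_i$-images.

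First I would fix $n\geq 1$ and use formula~\eqref{complexity} together with Property~(2) to conclude that there are exactly two LS factors of length $n$, call them $L_1$ with $\rm{Lext}(L_1)=\{1,3\}$ and $L_2$ with $\rm{Lext}(L_2)=\{0,2\}$. By Property~(3), $\theta_1$ restricts to a bijection from the set of LS factors onto the set of RS factors of length $n$, so there are also exactly two RS factors, denoted $R_1, R_2$. Next I would note the general identity $\rm{Rext}(\theta_i(w))=\theta_i(\rm{Lext}(w))$, valid because $\theta_i$ is a language-preserving antimorphism; applied to our $L_j$'s and using the explicit actions of $\theta_1, \theta_2$ on $\mathcal A$ from Property~(1), this gives
\[
\rm{Rext}(\theta_1(L_1))=\{0,3\}=\rm{Rext}(\theta_2(L_2)),\qquad \rm{Rext}(\theta_2(L_1))=\{1,2\}=\rm{Rext}(\theta_1(L_2)).
\]

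Now the four factors $\theta_1(L_1), \theta_2(L_1), \theta_1(L_2), \theta_2(L_2)$ are all RS, hence lie in $\{R_1, R_2\}$; since their Rext-sets realize both values $\{0,3\}$ and $\{1,2\}$, the two RS factors must in fact have distinct Rext-sets, and each Rext-set is realized by a unique RS factor. This forces the collapse $\theta_1(L_1)=\theta_2(L_2)=:R_1$ and $\theta_2(L_1)=\theta_1(L_2)=:R_2$.

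Finally, a nonempty BS factor $w$ of length $n$ is simultaneously LS and RS, so $w\in\{L_1,L_2\}\cap\{R_1,R_2\}$. A short case analysis on the four possibilities yields in each case $w=\theta_i(w)$ for some $i\in\{1,2\}$: e.g.\ $w=L_1=R_1$ means $w=\theta_1(L_1)=\theta_1(w)$, while $w=L_1=R_2$ means $w=\theta_2(L_1)=\theta_2(w)$, and similarly for $w=L_2$. I expect the main (minor) obstacle to be justifying cleanly that the two RS factors must carry distinct Rext-sets rather than both having the same one; this follows because otherwise one of $\{0,3\}$ or $\{1,2\}$ would never appear as a Rext of an RS factor, contradicting the explicit computation above. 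Everything else is bookkeeping.
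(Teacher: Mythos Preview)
Your argument is correct and complete. The paper itself gives no proof of this corollary beyond the single sentence ``By Property~(3), we deduce the relation between BS factors and $\theta_i$-palindromes,'' so what you have written is a careful unpacking of that deduction rather than a different route.

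Two small remarks. First, the paper cites only Property~(3), but you are right that Property~(2) is also needed: one must know that each length $n\geq 1$ carries exactly two LS factors (each with a two-element Lext set) in order to pin down the RS factors and force the collapse $\theta_1(L_1)=\theta_2(L_2)$, $\theta_2(L_1)=\theta_1(L_2)$. Second, your Rext computation can be shortcut slightly: from Property~(2) the two LS factors $L_1,L_2$ begin with $0$ and $1$ respectively, so the morphism $\theta_1\theta_2$ (which swaps $0\leftrightarrow 1$) must swap $L_1\leftrightarrow L_2$; then $\theta_1(L_2)=\theta_1(\theta_1\theta_2(L_1))=\theta_2(L_1)$ and $\theta_2(L_2)=\theta_1(L_1)$ follow immediately. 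This avoids the ``distinct Rext-sets'' step you flagged as the minor obstacle, but your version via Rext-sets is equally valid and arguably more transparent.
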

\begin{prop}\label{BS}
If $v \in {\mathcal L}(\mathbf u)$ is a~BS factor of length greater than $5$, then $v=\varphi(w)p_{w_n}$,
where $w_n$ is the last letter of $w$ and $p_0=p_2=10210$ and $p_1=p_3=01301$. Moreover, $$\rho(v)=\frac{1}{\lambda}\rho(w).$$
\end{prop}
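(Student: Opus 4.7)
The plan is to establish the structural decomposition $v = \varphi(w)\, p_{w_n}$ first, and then derive the frequency formula from Proposition~\ref{Frid}. Since $v$ is BS it is in particular LS, so by property~(2) of $\mathbf{u}$, $v$ is a prefix of $\varphi^n(0)$ or $\varphi^n(1)$ for some $n$; these cases are interchanged by the antimorphism $\theta_1 \in G$, so I treat only the first. Using $\mathbf{u} = \varphi(\mathbf{u})$ together with $\varphi^n(0) = \varphi(\varphi^{n-1}(0))$, I decompose
\[
v \;=\; \varphi(a_0 a_1 \ldots a_{k-1}) \cdot t, \qquad \text{$t$ a proper prefix of $\varphi(a_k)$,}
\]
with $a_0 a_1 a_2 \ldots = \varphi^{n-1}(0)$. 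Since $|v| > 5$ and $|\varphi(a)| \le 4$ for every $a$, we have $k \ge 2$.

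The crux of Stage~1 is the following observation, verified by a direct case analysis on $a_{k-2} \in \{0,1,2,3\}$: after any aligned occurrence of $\varphi(a_0 \ldots a_{k-2})$ in $\mathbf{u}$, the next five letters are uniquely equal to $p_{a_{k-2}}$. For example, with $a_{k-2} = 0$ one has $a_{k-1} \in \operatorname{Rext}(0) = \{1, 2\}$: if $a_{k-1} = 1$, then $\varphi(1) = 1021$ is followed by the first letter of $\varphi(0)$ or $\varphi(3)$, both equal to $0$, giving $10210 = p_0$; if $a_{k-1} = 2$, then $\operatorname{Rext}(2) = \{1\}$ forces $\varphi(2) = 102$ to be followed by $\varphi(1) = 1021$, whose first two letters again complete $10210 = p_0$. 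The letters $a_{k-2} \in \{1, 2, 3\}$ are handled symmetrically, using $\operatorname{Rext}(3) = \{0\}$. To obtain $v = \varphi(w)\, p_{w_n}$ it then remains to show the length equality $|v| = |\varphi(a_0 \ldots a_{k-2})| + 5$, which I deduce from the RS property of $v$: for any larger length the sixth letter onward is again forced by the realignment of $\mathbf{u}$ with the image parsing, ruling out a second right extension.

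For Stage~2, I apply Proposition~\ref{Frid}. To enumerate $I(v)$ I exploit that $\varphi$ is marked from the right, since the last letters of $\varphi(0), \varphi(1), \varphi(2), \varphi(3)$ are the four distinct letters $0, 1, 2, 3$, so image boundaries within any interpretation are uniquely determined from any one of them. Combined with the aligned parsing of Stage~1, this forces every interpretation to take the form $(w \cdot b_1 b_2,\, 0,\, |\varphi(b_1)| + |\varphi(b_2)| - 5)$ with $b_1 b_2$ a two-letter right extension of $w$, and each factor $wc$ of length $|w|+2$ produces exactly one such ancestor. Hence
\[
\rho(v) \;=\; \frac{1}{\lambda} \sum_{\substack{c :\ wc \in \mathcal{L}(\mathbf{u}) \\ |c| = 2}} \rho(wc),
\]
and two successive applications of Kirchhoff's law (Observation~\ref{KLaw}), first over the second letter of $c$ and then over the first, collapse this sum to $\rho(w)$.

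The hardest step is the length equality in Stage~1: excluding $|v| > |\varphi(w)| + 5$ requires a careful study of how occurrences of $v$ in $\mathbf{u}$ interact with the image parsing, arguing that beyond $p_{w_n}$ the continuation is again aligned with the image structure and hence forced. Once this is handled, Stage~2 follows essentially directly from Proposition~\ref{Frid} and Kirchhoff's law.
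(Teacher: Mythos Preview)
Your Stage~2 is essentially the paper's argument: enumerate the interpretations of $v$ and apply Proposition~\ref{Frid}, then collapse the sum via Observation~\ref{KLaw}. The paper simply lists the three interpretations explicitly (for $w_n\in\{1,3\}$ they are $(w01,0,3)$, $(w02,0,2)$, $(w30,0,2)$) rather than appealing to right-marking, but the substance is the same.

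The genuine gap is in Stage~1. Your five-letter observation is correct, but the step ``the sixth letter onward is again forced by the realignment'' does not hold. Take $a_{k-2}=0$: after any aligned $\varphi(a_0\cdots a_{k-2})$ the next five letters are indeed $10210$, yet the sixth letter equals $1$ when the following ancestor letter is $1$ (then $\varphi(1)=1021$ leaves one letter into $\varphi(c_k)$ with $c_k\in\{0,3\}$, and both images have second letter $1$) and equals $2$ when it is $2$ (then $\varphi(2)=102$ leaves two letters into $\varphi(c_k)$ with $c_k\in\{1,2\}$, and both images have third letter $2$). So precisely when $a_0\cdots a_{k-2}$ is itself right special---and it is, since $w$ turns out to be BS---the continuation is \emph{not} forced, and you cannot conclude that a longer prefix of $\varphi^n(0)$ fails to be RS. More structurally, the forced-continuation reasoning tracks a single aligned occurrence, whereas the RS property is about the union of continuations over all occurrences in~$\mathbf u$. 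The paper avoids this entirely via Property~(3): since $v$ is RS, $\theta_i(v)$ is LS and hence begins with $01301$ or $10210$; applying the antimorphism, $v$ itself \emph{ends} with one of these two words, and the decomposition $v=\varphi(w)p_{w_n}$ (with $w$ BS) then drops out of the shape of~$\varphi$ directly, without any length-forcing argument.
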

\begin{proof}
By Property $(2)$, every LS factor of length greater than $5$ starts either in $01301$ or in $10210$. Similarly, by Property $(3)$, every RS factor ends either in $01301$ or in $10210$.
It follows from the definition of $\varphi$ in~\eqref{varphi} that there exists $w \in {\mathcal L}(\mathbf u)$ such that $v=\varphi(w)01301$
or $v=\varphi(w)10210$ and that $w$ is necessarily a~BS factor. Consider $v=\varphi(w)01301$, the second case can be treated analogously.
It is then not difficult to see that $w$ ends in $w_n=1$ or $w_n=3$, hence $p_{w_n}=01301$.
In order to prove the relation between frequencies, we need to determine the set of interpretations of $v$.
It is readily seen that the set of interpretations is
\begin{itemize}
\item $\{(w01,0,3), (w02,0,2), (w30,0,2)\}$ if $w_n=1$ or $w_n=3$,
\item $\{(w10,0,3), (w13,0,2), (w21,0,2)\}$ if $w_n=0$ or $w_n=2$.
\end{itemize}
Using Proposition~\ref{Frid}, we obtain $\rho(v)=\frac{\rho(w01)+\rho(w02)+\rho(w30)}{\lambda}=\frac{\rho(w)}{\lambda}$ if $w_n=1$ or $w_n=3$, where the last equality follows from the fact that $w$ is always followed by $01, 02$, or $30$, and similarly, $\rho(v)=\frac{\rho(w10)+\rho(w13)+\rho(w21)}{\lambda}=\frac{\rho(w)}{\lambda}$ if $w_n=0$ or $w_n=2$.
\end{proof}
Proposition~\ref{BS} implies that if we want to generate all BS factors of $\mathbf u$, then it is enough to know BS factors of length less than or equal to $5$ and
to apply the mapping $w \to \varphi(w)p_{w_n}$ on them repeatedly.
Nonempty BS factors of length less than or equal to $5$ are:
\begin{enumerate}
\item $0$ and $1$, 
\item $01$ and $10$, 
\item $01301$ and $10210$. 
\end{enumerate}
The aim of the rest of this section is to show that
for any length $n \in \mathbb N, \ n \geq 1$, we have

$\#\{\rho(e)\mid  e \in {\mathcal L}_{n+1}(\mathbf u)\}=
\left\{\begin{array}{rl}
2 & \text{if} \ {\mathcal L}_n(\mathbf u) \ \text{contains a~BS factor},\\
3 & \text{otherwise}.
\end{array}\right.$

Let us draw in Figure~\ref{RauzyIllustration} reduced Rauzy graphs containing short BS factors. 
\begin{figure}[!h]\label{RauzyIllustration}
\begin{center}
\resizebox{8 cm}{!}{\includegraphics{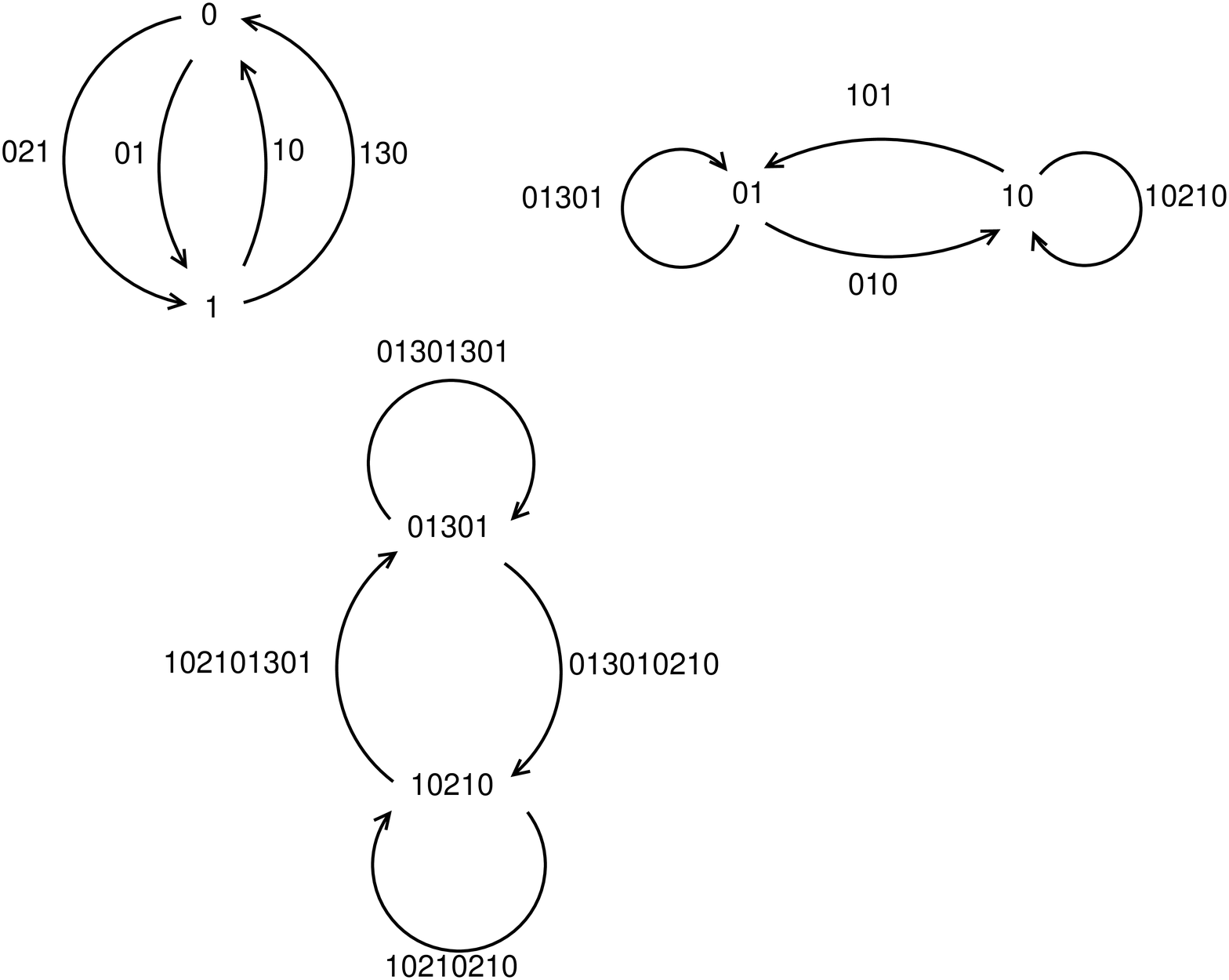}}
\end{center}
\caption{Reduced Rauzy graphs of $\mathbf u$ of order $n \in \{1,2,5\}$.}\label{RauzyIllustration}
\end{figure}
In order to describe factor frequencies, it suffices to consider reduced Rauzy graphs containing short BS factors
together with the following observations concerning reduced Rauzy graphs of $\mathbf u$.
\begin{obs}\label{redRauzy}
\begin{enumerate}
\item Any reduced Rauzy graph has either four vertices (two LS factors and two RS factors) or two vertices (BS factors).
\item Reduced Rauzy graph of larger order than 5 whose vertices are BS factors are obtained from the graphs in Figure~\ref{RauzyIllustration}
by a~repeated application of the mapping $w \to \varphi(w)p_{w_n}$ simultaneously to all vertices and edges.
\item By Corollary~\ref{KLaw2}, it is not difficult to see that if we find to a~reduced Rauzy graph $\tilde{\Gamma}_n$ whose vertices are not BS factors the reduced Rauzy graph of minimal larger order, say $\tilde{\Gamma}_m$, whose vertices are BS factors, then
    $$\{\rho(e) \mid e \ \text{edge in}\ \tilde{\Gamma}_n\}=\{\rho(e) \mid e \ \text{edge in}\ \tilde{\Gamma}_m\} \cup \{\rho(v) \mid v \ \text{vertex in} \  \tilde{\Gamma}_m\}.$$
\end{enumerate}
\end{obs}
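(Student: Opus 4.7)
The plan is to establish the three items in turn, drawing on the structural results already at hand: Properties (1)--(3), Corollary~\ref{BSpal}, Proposition~\ref{BS}, and Kirchhoff's law (Observation~\ref{KLaw}, Corollary~\ref{KLaw2}).

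For item (1), the counting of LS and RS factors is immediate from Property (2): $\Delta\C(n)=2$ together with $\#\rm{Lext}(w)=2$ for every LS factor $w$ forces, via formula~\eqref{complexity}, exactly two LS factors in $\L_n(\mathbf u)$, and the bijection $\theta_i$ from Property (3) gives two RS factors as well. The crux is to exclude the case of exactly one BS factor (which would produce three distinct vertices). I will suppose $w$ is BS; by Corollary~\ref{BSpal}, $w=\theta_i(w)$ for some $i$, say $i=1$. The morphism $\theta_1\theta_2$ acts on $\A$ with no fixed letter, so no nonempty word is fixed by $\theta_1\theta_2$; hence $\theta_2(w)\neq w$, and $\theta_2(w)$ is therefore the other RS factor, call it $v$. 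Letting $u$ be the other LS factor, $\theta_2(u)\in\{w,v\}$; the case $\theta_2(u)=v$ forces $u=\theta_2(v)=w$, a contradiction, so $\theta_2(u)=w$ and $u=\theta_2(w)=v$. Thus $u=v$ is also BS, BS factors come in pairs, and the reduced Rauzy graph has either $4$ or $2$ vertices.

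For item (2), Proposition~\ref{BS} already asserts that $w\mapsto\varphi(w)p_{w_n}$ is a bijection between BS factors of sufficiently close lengths and divides frequencies by $\lambda$; this handles vertices of the reduced Rauzy graph restricted to BS factors. To lift the map to edges I will show that the image of a simple path between BS factors $w_1,w_2$ is again a simple path, since any intermediate special subfactor at the larger order would, via the interpretation structure of $\varphi$ (as in the proof of Proposition~\ref{BS}), pull back to an intermediate special subfactor of the original path, violating simplicity. Surjectivity in the opposite direction follows by combining Proposition~\ref{BS} with item (1): every BS factor at the larger order lies in the image, and the matching of edges is then forced by cardinality.

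For item (3), Corollary~\ref{KLaw2} delivers nearly the entire identity. For each edge $e$ of $\tilde\Gamma_n$, one has $\rho(e)=\rho(v)$ where $v$ is the shortest BS factor containing $e$; by minimality of $m$, $|v|\geq m$. If $|v|=m$ then $v$ is a vertex of $\tilde\Gamma_m$, so $\rho(e)$ appears as a vertex label. If $|v|>m$, then $v$ lies along a simple path between two BS vertices of $\tilde\Gamma_m$, and Observation~\ref{KLaw} shows that this edge of $\tilde\Gamma_m$ carries label $\rho(v)=\rho(e)$. Conversely, each vertex and each edge of $\tilde\Gamma_m$ is realized in $\tilde\Gamma_n$ by extending it to the (essentially unique) simple path of $\Gamma_n$ passing through it, whose frequency matches by the same Corollary~\ref{KLaw2}. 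The main technical obstacle will be the verification in item (2) that $\varphi$ lifts to a graph isomorphism on the BS subgraphs --- specifically, that lifted simple paths remain simple and that no level-$m$ simple path escapes the correspondence; once that is in place, the remaining arguments reduce to direct applications of~\eqref{complexity} and Kirchhoff's law.
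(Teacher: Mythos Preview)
The paper records this statement as an \emph{Observation} and offers no proof at all; it is left to the reader as a direct consequence of Properties (1)--(3), Corollary~\ref{BSpal}, Proposition~\ref{BS}, and Corollary~\ref{KLaw2}. Your proposal therefore cannot be compared against a paper proof, but it does supply a sound justification along exactly the lines the paper implicitly relies on. In particular, your argument for item~(1)---using that $\theta_1\theta_2$ fixes no letter to rule out a single BS vertex---is clean and correct, and your outline for item~(2) identifies the right mechanism (lifting simple paths via $w\mapsto\varphi(w)p_{w_n}$ and checking no new special factors appear at intermediate positions).

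One small point in item~(3): your dichotomy ``$|v|=m$ versus $|v|>m$'' for the shortest BS extension $v$ of an edge $e\in\tilde\Gamma_n$ is slightly imprecise, because $e$ may already be longer than $m$. The cleaner split is whether $e$ contains a BS factor of length $m$ as a subfactor: if not, $e$ extends uniquely (Corollary~\ref{KLaw1}) to a BS factor of length $m$, so $\rho(e)$ is a vertex label of $\tilde\Gamma_m$; if so, $e$ lies on a unique simple path of $\tilde\Gamma_m$ and inherits its label. The converse inclusion, as you note, is immediate from Corollary~\ref{KLaw2}. With that adjustment your plan is complete.
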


The last step in the derivation of frequencies of factors of $\mathbf u$
is to determine the frequencies of edges and vertices in the reduced Rauzy graphs depicted in Figure~\ref{RauzyIllustration}.
In the sequel, we make use of Kirchhoff's law for frequencies (Observation~\ref{KLaw}), of the fact that symmetries preserve factor frequencies, and of the formula from Proposition~\ref{Frid}.
\begin{enumerate}
\item $\tilde{\Gamma}_1$:
$$\begin{array}{rcl}\rho(0)=\rho(1)&=&\frac{\sqrt{3}-1}{2}=\tfrac{\sqrt{3}+1}{2\lambda}\\
\rho(130)=\rho(021)&=&\rho(2)=\frac{2-\sqrt{3}}{2}=\frac{1}{2\lambda}\\
\rho(01)=\rho(10)&=&\rho(0)-\rho(130)=\frac{\sqrt{3}}{2\lambda}
\end{array}$$
In the second row, the first equality follows from the fact that symmetries preserve frequencies and $130=\theta_{2}(021)$ and the second equality by Corollary~\ref{KLaw1} from the fact that $2$ is neither LS, nor RS.
In the third row, the first equality is again due to symmetries and the second uses Kirchhoff's law for frequencies from Observation~\ref{KLaw}.
\item $\tilde{\Gamma}_2$:
$$\begin{array}{rcl}\rho(01)=\rho(10)&=&\frac{\sqrt{3}}{2\lambda}\\
\rho(01301)=\rho(10210)&=&\rho(130)=\frac{1}{2\lambda}\\
\rho(010)=\rho(101)&=&\rho(01)-\rho(01301)=\frac{\sqrt{3}-1}{2\lambda}
\end{array}$$
\item $\tilde{\Gamma}_5$:
$$\begin{array}{rcl}
\rho(01301)=\rho(10210)&=&\frac{1}{2\lambda}\\
\rho(\varphi(0)10210)=\rho(\varphi(1)01301)&=&\frac{\rho(0)}{\lambda}=\frac{\sqrt{3}-1}{2\lambda}\\
\rho(01301301)=\rho(10210210)&=&\rho(01301)-\rho(\varphi(0)10210)=\frac{2-\sqrt{3}}{2\lambda}=\frac{1}{2\lambda^2}
\end{array}$$
\end{enumerate}
Putting together Proposition~\ref{BS}, properties of reduced Rauzy graphs summarized in Observation~\ref{redRauzy}, and the knowledge of frequencies of vertices and edges in $\tilde{\Gamma}_1, \tilde{\Gamma}_2$, and $\tilde{\Gamma}_5$,
we obtain the following corollary.
\begin{coro}\label{2or3frequencies}
Let $n \in \mathbb N, \ n \geq 1,$ such that
\begin{enumerate}
\item ${\mathcal L}_{n}(\mathbf u)$ contains a~BS factor: then there exists $k \in \mathbb N$ such that the set $\{\rho(e)\mid  e \in {\mathcal L}_{n+1}(\mathbf u)\}$ is of one of the following forms:
\begin{enumerate}
\item $\{\frac{1}{2\lambda^{k+1}}, \frac{\sqrt{3}}{2\lambda^{k+1}}\}$,
\item $\{\frac{1}{2\lambda^{k+1}}, \frac{\sqrt{3}-1}{2\lambda^{k+1}}\}$,
\item $\{\frac{\sqrt{3}-1}{2\lambda^{k+1}}, \frac{1}{2\lambda^{k+2}}\}$.
\end{enumerate}
\item ${\mathcal L}_{n}(\mathbf u)$ does not contain a~BS factor: then there exists $k \in \mathbb N$ such that the set $\{\rho(e)\mid  e \in {\mathcal L}_{n+1}(\mathbf u)\}$ is of one of the following forms:
    \begin{enumerate}
\item $\{\frac{\sqrt{3}-1}{2\lambda^{k}}, \frac{1}{2\lambda^{k+1}}, \frac{\sqrt{3}}{2\lambda^{k+1}}\}$,
\item $\{\frac{\sqrt{3}}{2\lambda^{k+1}}, \frac{1}{2\lambda^{k+1}}, \frac{\sqrt{3}-1}{2\lambda^{k+1}}\}$,
\item $\{\frac{1}{2\lambda^{k+1}}, \frac{\sqrt{3}-1}{2\lambda^{k+1}}, \frac{1}{2\lambda^{k+2}}\}$.
\end{enumerate}
\end{enumerate}
\end{coro}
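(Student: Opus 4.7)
The plan is to identify $\{\rho(e)\mid e \in \mathcal L_{n+1}(\mathbf u)\}$ with the set of edge labels of $\tilde{\Gamma}_n$ and to reduce the computation to the three seed graphs $\tilde{\Gamma}_1$, $\tilde{\Gamma}_2$, $\tilde{\Gamma}_5$, whose edge and vertex frequencies have just been tabulated. I would split along the dichotomy of the statement, depending on whether $\tilde{\Gamma}_n$ has two BS vertices or four non-BS vertices.

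In the first case ($\mathcal L_n(\mathbf u)$ contains a BS factor), Observation~\ref{redRauzy}(2) says that $\tilde{\Gamma}_n$ is obtained from exactly one of the three seed graphs by $k \geq 0$ applications of the map $w \mapsto \varphi(w)p_{w_n}$ to all vertices and edges simultaneously. By Proposition~\ref{BS}, each such application divides every frequency by $\lambda$, so the two edge frequencies of $\tilde{\Gamma}_n$ are the two edge frequencies of the seed graph divided by $\lambda^k$. Substituting the explicit values for $\tilde{\Gamma}_1, \tilde{\Gamma}_2, \tilde{\Gamma}_5$ produces precisely the three pairs (1)(a), (1)(b), (1)(c).

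In the second case, Observation~\ref{redRauzy}(3) equates $\{\rho(e) \mid e \in \tilde{\Gamma}_n\}$ with $\{\rho(e) \mid e \in \tilde{\Gamma}_m\} \cup \{\rho(v) \mid v \in \tilde{\Gamma}_m\}$, where $\tilde{\Gamma}_m$ is the next reduced Rauzy graph of order $m > n$ whose vertices are BS. By the first case, $\tilde{\Gamma}_m$ is an iterate of one of the three seeds; its two BS vertices share a common frequency (they are exchanged by the symmetries in $G$, which preserve frequencies), and this frequency is $\rho(0)\lambda^{-k}$, $\rho(01)\lambda^{-k}$, or $\rho(01301)\lambda^{-k}$. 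Appending this one extra value to the edge-frequency pair from Case~1 yields precisely the three-element sets (2)(a), (2)(b), (2)(c). The main obstacle is the bookkeeping of Case~1: verifying that every BS-vertex reduced Rauzy graph arises from exactly one of the three seeds. Exhaustiveness follows from the descent in Proposition~\ref{BS}, which uniquely writes any BS factor of length greater than $5$ as $\varphi(w)p_{w_n}$ for a shorter BS factor $w$, with the iteration terminating at one of the six short BS factors. A final check using that $\sqrt{3}$ is irrational and $\lambda > 1$ ensures that the three frequencies in each Case~2 set are pairwise distinct, so the cardinality is indeed three.
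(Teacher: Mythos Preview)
Your proposal is correct and follows exactly the route the paper intends: the corollary is stated there as the direct consequence of Proposition~\ref{BS}, Observation~\ref{redRauzy}, and the explicit frequency tables for $\tilde{\Gamma}_1,\tilde{\Gamma}_2,\tilde{\Gamma}_5$, and you have simply spelled out that combination in the two cases. One small remark: when you say ``by Proposition~\ref{BS} each application divides every frequency by $\lambda$'', note that Proposition~\ref{BS} is literally stated only for BS factors (vertices); the same conclusion for the edge labels follows either by repeating the interpretation argument of Proposition~\ref{BS} for simple paths (which also begin and end in $01301$ or $10210$), or by observing via Corollary~\ref{KLaw2} that each edge label equals the frequency of a longer BS factor and then applying Proposition~\ref{BS} to that factor.
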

A~direct consequence of the previous corollary is the optimality of the upper bound from Theorem~\ref{UpperBoundMoreSymmetries}.
\begin{prop}
Let $\mathbf u$ be the fixed point of $\varphi$ defined in~\eqref{varphi}. Then for every $n \in \mathbb N, n\geq 1$, it holds
$$\# \{\rho(e)\mid  e \in \L_{n+1}(\mathbf u) \}\quad = \quad \frac{1}{\#G}\Bigl(4\Delta C(n)+\#G-X-Y\Bigr),
$$
where $X$ is the number of BS factors of length $n$ and $Y$ is
the number of BS factors of length $n$ that are $\theta_1$- or $\theta_2$-palindromes.
\end{prop}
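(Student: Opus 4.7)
The plan is to evaluate both sides of the claimed equality directly, using the data assembled for $\mathbf u$ earlier in the section. Since $\#G=4$ and $\Delta\C(n)=2$ for every $n\geq 1$ by Property~(2), the right-hand side simplifies to $\tfrac{1}{4}(8+4-X-Y)=3-\tfrac{1}{4}(X+Y)$. The left-hand side is evaluated by Corollary~\ref{2or3frequencies}: it equals $2$ when $\L_n(\mathbf u)$ contains a BS factor and $3$ otherwise. The proposition therefore reduces to two arithmetic statements: $X+Y=0$ when $\L_n(\mathbf u)$ has no BS factor, and $X+Y=4$ when it does.

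The first case is immediate, since $Y\leq X=0$.

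For the second case I would argue as follows. Corollary~\ref{BSpal} says that every nonempty BS factor is a $\theta_i$-palindrome for some $i\in\{1,2\}$, hence $Y=X$ whenever $X\geq 1$; it therefore suffices to show $X=2$ at every length $n$ admitting a BS factor. For $n\in\{1,2,5\}$ this is read directly from the explicit list preceding Proposition~\ref{BS}: the BS factors form the $G$-orbits $\{0,1\}$, $\{01,10\}$, $\{01301,10210\}$, each of size two. For $n>5$, Proposition~\ref{BS} says every BS factor of length $n$ has the form $\varphi(w)p_{w_n}$ for a strictly shorter BS factor $w$, and the map $w\mapsto\varphi(w)p_{w_n}$ sends BS factors to BS factors. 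Since $\theta_1$ and $\theta_2$ each permute letters within the partition $\{\{0,1\},\{2,3\}\}$ (on which the block lengths of $\varphi$ are constant) and every suffix $p_{w_n}$ has length $5$, the iteration preserves lengths inside each $G$-orbit and sends the three starting pairs to three chains of $\theta_i$-related pairs, each of size two.

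The only genuine obstacle is a finite bookkeeping check: one must verify that the three length-orbits starting at $n=1,2,5$ never collide, so that the existence of a BS factor of length $n$ forces $X=2$ rather than $X=4$ or $X=6$. This is a routine inductive arithmetic argument, because $|\varphi(w)p_{w_n}|=|\varphi(w)|+5$ and the Parikh vector of $\varphi(w)p_{w_n}$ is determined by that of $w$; combined with the three disjoint starting lengths, this yields three strictly increasing mutually disjoint length sequences, which completes the identification $X=Y=2$ and hence the proof.
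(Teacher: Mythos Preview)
Your overall structure matches the paper's: both split on whether $\L_n(\mathbf u)$ contains a BS factor, both invoke Corollary~\ref{2or3frequencies} for the left-hand side, and both compute the right-hand side as $3-\tfrac14(X+Y)$. Your identification $Y=X$ via Corollary~\ref{BSpal} is also the paper's move. The only divergence is in how you obtain $X=2$.

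Here you take a harder road than necessary, and you leave its key step unverified. Proposition~\ref{BS} only tells you that every BS factor of length $>5$ is of the form $\varphi(w)p_{w_n}$ for some shorter BS factor $w$; it does not assert that this map sends BS factors to BS factors, nor that it is injective, both of which you use. More seriously, the length of $\varphi(w)p_{w_n}$ is $4(n_0+n_1)+3(n_2+n_3)+5$, which depends on the full Parikh vector of $w$, not just on $|w|$; so the disjointness of the three length-sequences is not a one-line induction on length, and calling it ``routine bookkeeping'' without carrying it out leaves a genuine gap.

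The paper avoids all of this by citing Observation~\ref{redRauzy}(1): the reduced Rauzy graph of any order has either four vertices (two LS and two RS, none BS) or two vertices (both BS). This is immediate from Property~(2): since $\Delta\C(n)=2$ and every LS factor has exactly two left extensions, formula~\eqref{complexity} forces exactly two LS factors of each length $n\geq 1$, and by Property~(3) exactly two RS factors; hence $X\leq 2$. Combined with your own orbit observation (the $G$-orbit of a BS $\theta_i$-palindrome has size exactly two, because $\theta_1\theta_2$ fixes no nonempty word), this gives $X=2$ with no bookkeeping at all. Replacing your inductive argument by this two-line count makes the proof complete and essentially identical to the paper's.
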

\begin{proof}
Let us consider at first $n$ such that ${\mathcal L}_n(\mathbf u)$ does not contain a~BS factor.
Then, on one hand, Corollary~\ref{2or3frequencies} states that $\# \{\rho(e)\mid  e \in \L_{n+1}(\mathbf u)\}=3$.
On the other hand, $\frac{1}{\#G}\Bigl(4\Delta C(n)+\#G-X-Y\Bigr)=\frac{4\cdot 2+4-0-0}{4}=3$.
At second, let ${\mathcal L}_n(\mathbf u)$ contain a~BS factor.
Then, on one hand, we have by Corollary~\ref{2or3frequencies} $\# \{\rho(e)\mid  e \in \L_{n+1}(\mathbf u)\}=2$.
On the other hand, by (1) of Observation~\ref{redRauzy}, ${\mathcal L}_n(\mathbf u)$ contains 2 BS factors, and by Corollary~\ref{BSpal}, one BS factor is a~$\theta_1$-palindrome and one BS factor is a~$\theta_2$-palindrome, thus $\frac{1}{\#G}\Bigl(4\Delta C(n)+\#G-X-Y\Bigr)=\frac{4\cdot 2+4-2-2}{4}=2$.
\end{proof}
\begin{pozn}
There are also infinite words whose language is invariant under elements of a~finite group of symmetries, however, the upper bound
from Theorem~\ref{UpperBoundMoreSymmetries} is not reached for any $n \in \mathbb N$. Such an example is the famous Thue-Morse word.
Its group of symmetries $G=\{\rm{Id}, R, \Psi, \Psi\circ R\}$, where $\Psi$ is a~morphism acting on $\{0,1\}$ as follows:
$$\Psi: 0 \to 1, \ 1 \to 0.$$
As shown by Dekking~\cite{De}, the Thue-Morse word ${\mathbf u}_{TM}$ satisfies for $n \in \mathbb N, \ n\geq 1,$ $$\#\{\rho(e) \mid e \in {\mathcal L}_{n+1}({\mathbf u}_{TM})\}=\left\{\begin{array}{rl}
1 & \text{if ${\mathbf u}_{TM}$ contains a~BS factor of length $n$,}\\
2 & \text{otherwise}.
\end{array}\right.$$
But, the upper bound from Theorem~\ref{UpperBoundMoreSymmetries} is of the following form for $n \in \mathbb N, \ n\geq 1,$
$$\begin{array}{rl}
2 \ \text{or} \ 4 & \text{if ${\mathbf u}_{TM}$ contains a~BS factor of length $n$,}\\
3 \ \text{or}\ 5& \text{otherwise}.
\end{array} $$
\end{pozn}
\section{Acknowledgement}
I would like to thank E. Pelantov\'a and \v S. Starosta for careful reviewing and useful remarks.
I acknowledge financial support by the Czech Science Foundation
grant 201/09/0584, by the grants MSM6840770039 and LC06002 of the
Ministry of Education, Youth, and Sports of the Czech Republic.

\end{document}